\documentclass[12pt]{article}
\topmargin -0.2 in
\textwidth 5.9 in

\usepackage{amssymb,amsfonts,amsthm}
%\addtolength{\leftmargin}{-6.0\parindent} \hoffset-0.5in
%\voffset-0.65in \textheight23cm \textwidth16cm
\usepackage{amsmath,amsthm,amssymb}
\usepackage{amscd}
\usepackage{amsfonts}
\usepackage{color}
\topmargin -0.2 in \textwidth 5.9 in

\textheight 8.8 in

\newcommand{\con}{\operatorname{con}}

\newcommand{\occ}{\operatorname{occ}}

\newcommand{\os}{\operatorname{OccSet}}

\newtheorem{theorem}{Theorem}[section]
\newtheorem{ex}[theorem]{Example}
\newtheorem{cor}[theorem]{Corollary}

\newtheorem{lemma}[theorem]{Lemma}
\newtheorem{definition}[theorem]{Definition}

\newtheorem{prop}[theorem]{Proposition}
\newtheorem{obs}[theorem]{Observation}

\newtheorem{question}{Question}

\newtheorem{claim}{Claim}

\begin{document}

\title{Non-finitely related and finitely related monoids}
\author{Olga  B. Sapir}

\date{Dedicated to professor Mikhail V. Volkov in honor of his 70th birthday}

\maketitle

\begin{abstract} We transform the method of Glasson into a sufficient condition
under which a monoid is non-finitely related, add a new member to the collection of
interlocking word-patterns,
and use it to show that the monoid
$M(ab^2a, a^2b^2)$ is non-finitely related.

We also give a sufficient condition under which a monoid is finitely related and use it show that 
$M(a^2b^2)$ is finitely related. Together with the results of Glasson this
completes the description of  all finitely related monoids among the monoids 
of the form $M(W)$ where every word ${\bf u} \in W$ depends on two variables
and every variable occurs twice in $\bf u$.

\end{abstract}

\section{Introduction}
\label{sec: intr}

A finite semigroup is {\em finitely related} if its term functions are determined by a finite set of finitary relations.
The study of finitely related semigroups started in 2011 with Davey et al. \cite{DJPS}, who showed among many other things that the finite relatedness is a varietal property, in the sense that finite algebras that satisfy the same identities are simultaneously finitely related or not.

The first example of a non-finitely related semigroup was found by Mayr \cite{Mayr}. It was the 
six-element Brandt monoid which behaves badly with respect to almost any varietal property. The second example \cite{Steindl}
was also a monoid: a semigroup with an identity element. In the light of a long history of studying the finite basis property of finite semigroups,
it is not surprising that both examples are monoids.
 Accidentally, but again not surprisingly, Steindl in \cite{Steindl} used implicitly the so-called {\em chain} word-patterns.

The chain words were initially introduced in 2010 by Wen Ting Zhang for showing that certain 5 element monoid $P_2^1$ is 
{\em hereditary finitely based} (HFB) (a monoid $M$ is HFB if every monoid in the variety generated by $M$ is finitely based).
$P_2^1$ is the only existing example of an HFB finite monoid which generates variety with infinitely many infinite (ascending) chains of subvarieties.
The monoid $P_2^1$ is contained in one of the limit varieties discovered by Sergey Gusev and the result of Zhang later became a part of \cite{Gusev-Li-Zhang}.

Since then the chain words have had an amazing variety of applications.
In 2014 Edmond Lee and Wen Ting Zhang used the chain words for constructing non-finitely generated  monoid varieties 
\cite{Lee14, Lee15, LZ}. In 2015 the chain words were used by Jackson and Lee to show that the monoid $M(abab)$ generates a variety with
uncountably many subvarieties \cite{JL}, and by  Jackson for  constructing a finite monoid with infinite irredundant identity basis \cite{MJ}.  Ren, Jackson, Zhao, and Lei  utilized chain words in the identification
of semiring limit varieties \cite{RJZL}. Independently, the chain words were introduced in \cite{Gusev-Vernikov}. Recently, Gusev \cite{SG} weaved the chain words into a more complicated pattern to show that the finitely based monoid $M(a^2b^2)$ also generates a variety with uncountably many subvarieties.

Glasson \cite{DG} used the chain words 
to prove that the monoid $M(abab)$ is not finitely related. 
Also in \cite{DG}, Glasson introduced two new word-patterns: the crown words to show that the monoid $M(ab^2a)$ is non-finitely related and generates a variety with uncountably many subvarieties \cite{DG1}, and 
 the maelstrom words to show that  $M(abab, a^2b^2)$ is non-finitely related. 

 In Sect. \ref{sec: S} we transform the method of Glasson  into a sufficient condition
under which a monoid is non-finitely related.
 In Sect. \ref{sec: W} we add a new word-pattern to the collection of interlocking words,
and use it to prove that  the monoid
$M(ab^2a, a^2b^2)$ is also non-finitely related.
 In Sect. \ref{sec: FR} we give a sufficient condition
under which a monoid is finitely related and use it show that 
$M(a^2b^2)$ is finitely related. Together with the results in \cite{DG} this
completes the description of  all finitely related monoids among the monoids 
of the form $M(W)$ where every word ${\bf u} \in W$ depends on two variables
and every variable occurs twice in $\bf u$.

\section{Preliminaries}

\subsection{Isoterms and Dilworth-Perkins construction}

A word $\bf u$ said to be an isoterm for a semigroup $S$ if $S$ does not satisfy any nontrivial identity of the form ${\bf u} \approx {\bf v}$.
For any set of words $W$, let $M(W)$ denote the Rees quotient of the free monoid over the ideal consisting of words which are not
subwords of any word in $W$. 
This construction was used by Perkins \cite{P} to build one of the first two examples of non-finitely based finite semigroups.
If $W = \{{\bf u}_1, \dots, {\bf u}_m\}$ for some $m>0$ then we write  $M(W) = M({\bf u}_1, \dots, {\bf u}_m).$

\begin{lemma} \cite[Lemma~3.3] {MJ05} \label{L: iso}

Let $W$ be a set of words and $M$ be a monoid. Then every word in $W$ is an isoterm for $M$ if and only if 
the variety generated by $M$ contains $M(W)$.

\end{lemma}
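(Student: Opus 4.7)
The lemma is a biconditional, so the plan is to prove each direction separately, invoking Birkhoff's HSP theorem: $M(W) \in \var(M)$ if and only if $M(W)$ satisfies every identity satisfied by $M$.

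For the direction ``$M(W) \in \var(M)$ implies every word in $W$ is an isoterm,'' I would argue contrapositively. Suppose some ${\bf u} \in W$ is not an isoterm for $M$, so $M$ satisfies a nontrivial identity ${\bf u} \approx {\bf v}$. If $M(W) \in \var(M)$, then $M(W)$ also satisfies this identity. Consider the canonical substitution $\phi$ that sends each variable to itself, viewed as an element of $M(W)$. Because ${\bf u}$ is a subword of itself and ${\bf u} \in W$, $\phi({\bf u}) = {\bf u}$ is a nonzero element of $M(W)$; meanwhile $\phi({\bf v})$ is either zero (when ${\bf v}$ is not a subword of any word in $W$) or equals ${\bf v}$ as a nonzero element of $M(W)$. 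Either way, since ${\bf v} \neq {\bf u}$ as words, $\phi({\bf u}) \neq \phi({\bf v})$ in $M(W)$, contradicting that $M(W)$ satisfies ${\bf u} \approx {\bf v}$.

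For the converse, assume every word in $W$ is an isoterm for $M$, take any identity ${\bf p} \approx {\bf q}$ satisfied by $M$, and fix a substitution $\phi \colon X \to M(W)$; the goal is $\phi({\bf p}) = \phi({\bf q})$ in $M(W)$. If $\phi({\bf p}) = \phi({\bf q}) = 0$ we are done, so by symmetry assume $\phi({\bf p}) \neq 0$. Then each $\phi(x_i)$ for $x_i \in \var({\bf p})$ is a subword ${\bf w}_i$ of some word in $W$, and the concatenation $\phi({\bf p}) = {\bf w}_{i_1} \cdots {\bf w}_{i_n}$ is itself a subword of some ${\bf s} \in W$; write ${\bf s} = \alpha \, \phi({\bf p}) \, \beta$. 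Let $\widehat{\bf q}$ denote the word obtained from ${\bf q}$ by replacing each variable $x_j$ with the word $\phi(x_j)$. Substituting variables-by-words into the $M$-identity and padding by $\alpha$ and $\beta$ yields $M \models {\bf s} \approx \alpha \widehat{\bf q} \beta$; since ${\bf s}$ is an isoterm for $M$, this identity is trivial, so ${\bf s} = \alpha \widehat{\bf q} \beta$ as words, forcing $\widehat{\bf q} = \phi({\bf p})$, i.e.\ $\phi({\bf q}) = \phi({\bf p})$ in $M(W)$.

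The main obstacle is the bookkeeping when some $x_j \in \var({\bf q}) \setminus \var({\bf p})$ has $\phi(x_j) = 0$ in $M(W)$: then $\widehat{\bf q}$ is not literally a word and the derivation above breaks down. I would handle this by first replacing every such $x_j$ with a fresh letter $c$ not occurring in any word of $W$, and redoing the substitution in the free monoid; the resulting $M$-identity takes the form ${\bf s} \approx \alpha \widehat{\bf q}' \beta$ where $\widehat{\bf q}'$ contains the fresh letter $c$, so it cannot be trivial, contradicting that ${\bf s}$ is an isoterm for $M$. This rules out the problematic case and completes the proof.
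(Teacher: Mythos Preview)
The paper does not supply its own proof of this lemma; it is simply quoted from \cite[Lemma~3.3]{MJ05}. Your argument is the standard one for this well-known fact and is correct in both directions, including the fresh-letter device for the edge case where some $x_j \in \con({\bf q}) \setminus \con({\bf p})$ is sent to $0$ by $\phi$.
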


Given $\alpha, \beta \ge 1$ denote 
\[A_{\alpha, \beta}:= \{x^\alpha \approx x^{\alpha+\beta},  t_1 x t_2 x \dots t_\alpha x \approx x^\alpha t_1 t_2 \dots t_\alpha \}.\]

It is easy to see that if every variable occurs in every word in $W$ less than $\alpha$ times then $M(W)$ satisfies $A_{\alpha,1}$.

\subsection{The order $<_{\bf u}$ on the occurrence set of $\bf u$}

Given a word ${\bf u}$, we use $\con({\bf u})$ to denote the set of all variables appearing in $\bf u$.
Given $x \in \con({\bf u})$, we use $\occ(x, {\bf u})$ to denote the number of times $x$ occurs in $\bf u$.
We use ${_ix}$ to denote the $i$-th occurrence of a variable $x$ from the left. Define
\[\os({\bf u}) = \{ {_ix} \mid x \in \con({\bf u}), 1 \le i \le \occ(x, {\bf u}) \}.\] 
The word $\bf u$ induces a total order $<_{\bf u}$ on $\os({\bf u})$ defined by  $({_ix}) <_{\bf u} ({_jy})$ if and only if 
the $i^{th}$ occurrence of $x$ precedes the $j^{th}$ occurrence of $y$ in $\bf u$. We write $({_ix}) \ll_{\bf u} ({_jy})$
when ${_ix}$ immediately precedes  ${_jy}$ in $\bf u$.

%%%%%%%%%%%%%%%%%%%%%%%%%%%%%%%
\subsection{Schemes and Finite Relatedness}

We assume that the order of variables in a word ${\bf u} = {\bf u}(x_1, \dots, x_n)$ with $\con({\bf u}) = \{x_1, \dots, x_n\}$ is fixed and sometimes say that ${\bf u}$ is an $n$-ary term. For example, if ${\bf u}(x,y) = xyxy$ then ${\bf u}(y,x) = yxyx$.
Given $1 \le i < j \le n$ and ${\bf u}(x_1, \dots, x_n)$ we use ${\bf u}^{(ij)}$ to denote the result of renaming $x_i$ by $x_j$ in $\bf w$.

If $n \ge 2$ and $k=n-1$, then Definition~2.3 in \cite{DJPS} says that an indexed family
of $n$-ary terms $\{{\bf t}_{ij}(x_1, \dots, x_n) \mid 1 \le i < j \le n \}$ is a {\em scheme} for a semigroup $S$ if
for each $1 \le i < j \le n$ and $1 \le k< l \le n$ the 
following holds.

Dependency Condition:
\[S \models  {\bf t}_{ij}(x_1, \dots, x_i, \dots, x_j, \dots, x_n)  \approx   {\bf t}_{ij}(x_1, \dots, x_j, \dots, x_j, \dots, x_n).\]

Consistency Condition: 

{\bf Case 1}: If $i, j, k, l$ are pairwise distinct then $S \models  {\bf t}_{ij}^{(kl)} \approx  {\bf t}^{(ij)}_{kl}$;

{\bf Case 2}: If $j=k$  then $S \models  {\bf t}_{ij}^{(jl)} \approx  {\bf t}^{(il)}_{jl}$;

{\bf Case 3}: If $i=k$ and $j <l$  then $S \models  {\bf t}_{ij}^{(jl)} \approx  {\bf t}^{(jl)}_{il}$.

A scheme $\{{\bf t}_{ij}(x_1, \dots, x_n) \mid 1 \le i < j \le n \}$ for $S$ comes from a term ${\bf t}_n$ if
for each $1 \le i < j \le n$ we have 
$S \models  {\bf t}_n^{(ij)} \approx  {\bf t}_{ij}$.

\begin{lemma} \cite[Theorem~2.9] {DJPS} \label{L: nfr}

A semigroup $S$ is finitely related if and only if for sufficiently large $k$, every scheme $\{{\bf t}_{ij}(x_1, \dots, x_n) \mid 1 \le i < j \le n \}$ for $S$ 
with $n>k$ comes from a term.

\end{lemma}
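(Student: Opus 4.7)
The plan is to exploit the Galois correspondence between operation clones and relational clones, using the standard reformulation that $S$ is finitely related if and only if $\operatorname{Clo}(S) = \operatorname{Pol}(R)$ for a single finite relation $R \subseteq S^r$ (several relations can always be amalgamated into one by stacking tuples). One direction of the claim is essentially a syntactic tautology: every $n$-ary term ${\bf t}_n$ produces a scheme $\{{\bf t}_n^{(ij)}\}$ automatically, since the dependency condition is immediate and the three consistency cases are instances of the commutativity of variable renamings. The substance lies in the converse: that \emph{every} scheme arises this way when $n$ is sufficiently large.

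For the forward direction, assume $\operatorname{Clo}(S) = \operatorname{Pol}(R)$ with $R \subseteq S^r$ and choose $k$ with $k > |S|^r$. Given any scheme $\{{\bf t}_{ij}\}$ of arity $n > k$, I would assemble a single $n$-ary operation $f$ on $S$ by pigeonhole: for each tuple $(a_1, \dots, a_n) \in S^n$ there is a pair $i<j$ with $a_i = a_j$, and I set $f(a_1, \dots, a_n) := {\bf t}_{ij}(a_1, \dots, a_n)$; the consistency conditions show the value is independent of the chosen repeated pair, while the dependency condition keeps everything compatible. To verify $f$ preserves $R$, take any $r \times n$ matrix with columns in $R$; since $n > |S|^r$, two columns of the matrix must coincide, so row-by-row evaluation of $f$ reduces uniformly to evaluation of a single ${\bf t}_{ij}$, which is already a term and therefore preserves $R$. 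Hence $f \in \operatorname{Pol}(R) = \operatorname{Clo}(S)$ is represented by some term ${\bf t}_n$, and by construction $S \models {\bf t}_n^{(ij)} \approx {\bf t}_{ij}$ for every pair.

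For the backward direction I would produce an explicit finite relation witnessing finite relatedness. Let $m = |S|^k$, enumerate $S^k$ as $\vec{a}_1, \dots, \vec{a}_m$, and set
\[
R := \{(g(\vec{a}_1), \dots, g(\vec{a}_m)) \mid g \text{ is a } k\text{-ary term operation of } S\} \subseteq S^m.
\]
The inclusion $\operatorname{Clo}(S) \subseteq \operatorname{Pol}(R)$ is automatic. For the reverse inclusion, let $f$ be any $n$-ary operation preserving $R$, and assume without loss of generality $n > k$ (padding with dummy variables if necessary). Preservation of $R$ forces every reduct of $f$ obtained by identifying variables down to at most $k$ many to be a $k$-ary term operation. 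Inducting upward on the number of remaining distinct variables, I would conclude that every $(n{-}1)$-ary reduct $f^{(ij)}$ is also a term operation, so that the family $\{f^{(ij)} \mid 1 \le i < j \le n\}$ is a genuine scheme for $S$. The scheme hypothesis then supplies a term ${\bf t}_n$ with $S \models {\bf t}_n^{(ij)} \approx f^{(ij)}$ for every pair, and since every $n$-tuple over $S$ with $n > |S|$ has a repeated coordinate, this identifies ${\bf t}_n$ with $f$ as operations on $S$.

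The principal obstacle is the upward induction in the backward direction: verifying that preservation of the graph relation $R$ really does propagate from the $k$-ary reducts all the way up to the $(n{-}1)$-ary reducts of $f$, so that $\{f^{(ij)}\}$ actually satisfies the dependency and consistency conditions of a scheme. This is where one may need to enlarge $R$ (raising the arity $m$) or iterate the construction, and it is where the precise relationship between the $k$ appearing in the scheme hypothesis and the arity of the witnessing relation for finite relatedness gets pinned down.
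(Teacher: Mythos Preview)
The paper does not prove this lemma; it is quoted as Theorem~2.9 of \cite{DJPS} and used throughout as a black box. There is thus no proof in the present paper to compare your attempt against.

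For what it is worth, your sketch is essentially the argument of \cite{DJPS} and is correct in outline. The forward direction is exactly right. In the backward direction the ``principal obstacle'' you flag dissolves on inspection: each reduct $f^{(ij)}$ again lies in $\operatorname{Pol}(R)$ because polymorphism clones are closed under identification of variables, so by induction on arity (with base case $n\le k$, which your choice of $R$ as the graph of $k$-ary term operations handles directly) every $f^{(ij)}$ is a term operation; the resulting family $\{f^{(ij)}\}$ automatically satisfies the dependency and consistency axioms, since identifications of two pairs of variables in a single operation $f$ commute regardless of how the pairs overlap. The only missing bookkeeping is that you must take $k \ge |S|$ (which the hypothesis permits, since it only asserts the scheme property for \emph{some} sufficiently large $k$) so that every $n$-tuple with $n>k$ has a repeated coordinate and hence $f$ and ${\bf t}_n$ agree everywhere on $S^n$.
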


%%%%%%%%%%%%%%%%%%%%%%%%%%%%%%%%%%%%%%%%

\section{Sufficient condition under which a monoid is not finitely related} \label{sec: S}

Given variables $\{x, y, z\}$ and a word(term) $\bf u$, we use
${\bf u}^{\not x}$ to denote the result of deleting all occurrences of $x$ in $\bf u$, and ${\bf u}[y, z]$ to denote the result of deleting all occurrences of
all variables other than $y$ and $z$  in $\bf u$.
For $n \ge 2$ and $\mathcal X_n= \mathcal X (x_1, x_2, \dots,  x_n)$, define 
\[\mathcal Y (x_2, \dots,  x_n):= \mathcal X^{\not{x_1}} (x_1, x_2, \dots,  x_n).\]

The following theorem explains the method used by Glasson in \cite{DG} to prove 
Theorems 4.4, 4.13 and 4.18 and can be used to reprove them\footnote{Theorem~4.18 in \cite{DG} is missing \eqref{del22} for $\mathcal X = \mathcal R$.}.

\begin{theorem} \label{main}
 Let $M$ be a monoid.  
Suppose that  one can find a sequence of $n$-ary words $\{\mathcal X_n = \mathcal X(x_1, \dots, x_n)  \mid n =1, 2, \dots  \}$ such that 
 for some  (possibly partial) operation $*$ on the free monoid we have:

(i) for every $n \in \mathbb N$, if $r =0, 2, 4, \dots$ then $M$ satisfies
\begin{equation} \label{del1} \mathcal X^{\not{y_1}} (x_1, \dots,x_r, y_1, \dots, y_n)  \approx  \mathcal X(x_1, \dots, x_r) * \mathcal Y(y_2, \dots, y_n);\end{equation}
 if $r =1, 3, 5, \dots$ then $M$ satisfies
\begin{equation} \label{del22} \mathcal X^{\not{y_1}} (x_1, \dots,x_r, y_1, \dots, y_n)  \approx  \mathcal X(x_1, \dots, x_r) * \mathcal X (y_2,  \dots, y_n);\end{equation}

(ii) for every $r, k \in \mathbb N$, $M$ satisfies
\begin{equation} \label{com}  \mathcal X(x_1, \dots, x_r) * \mathcal X(y_1, \dots, y_k)  \approx \mathcal X(y_1, \dots, y_k)   * \mathcal X(x_1, \dots, x_r); \end{equation}

(iii) $\occ(x_1, \mathcal X_2) = \occ(x_2, \mathcal X_2) = d$ and $M \models A_{\alpha,\beta}$ for some $\alpha, \beta \in \mathbb N$ with  $\alpha \le 2d$;

(iv)  for any even $n\ge 6$ if ${\bf t}_n={\bf t}(x_1, \dots, x_n)$ is an $n$-ary word such that  $M \models {\bf t}_n[x_i,x_{j}] \approx \mathcal X_n[x_i,x_{j}]$
for each $\{i,j\} \ne \{1,n\}$, then $M \not \models {\bf t}_n[x_1,x_n] \approx \mathcal Y(x_n, x_1)$.

Then $M$ is not finitely related. 

\end{theorem}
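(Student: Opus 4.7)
By Lemma \ref{L: nfr}, to prove that $M$ is not finitely related it suffices to exhibit, for arbitrarily large even $n\ge 6$, an $n$-ary scheme $\{{\bf t}_{ij}(x_1,\ldots,x_n)\mid 1\le i<j\le n\}$ for $M$ that does not come from any term. The plan is to construct such a scheme directly out of $\mathcal X_n$, exploiting the deletion identities in (i), the commutation identity (ii), and the exponent bound (iii), and then to use (iv) as the obstruction preventing the scheme from coming from a single term.

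For each pair $\{i,j\}$ with $i<j$, I would define ${\bf t}_{ij}(x_1,\ldots,x_n)$ as a $*$-concatenation of two $\mathcal X$- or $\mathcal Y$-blocks obtained from $\mathcal X_n$ by deleting one of $x_i$, $x_j$ and then applying (i). The aim is that on every two-variable projection $[x_p,x_q]$ with $\{p,q\}\neq\{1,n\}$, the word ${\bf t}_{ij}$ agrees with $\mathcal X_n[x_p,x_q]$ in $M$, while the corner term ${\bf t}_{1,n}$ is chosen so that Consistency forces the $[x_1,x_n]$-projection of any potential parent term to equal $\mathcal Y(x_n,x_1)$ rather than $\mathcal X(x_1,x_n)$. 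The parity distinction between (\ref{del1}) and (\ref{del22}) is precisely what makes such a twist at the corner available: replacing $r$ by $r\pm 1$ swaps a $\mathcal Y$-block for an $\mathcal X$-block in the decomposition, and this is the degree of freedom I exploit at the pair $\{1,n\}$.

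With the scheme in hand, Dependency follows from (iii): collapsing $x_i$ onto $x_j$ increases $\occ(x_j)$ past $2d$, and $A_{\alpha,\beta}$ absorbs the surplus occurrences. Each of the three Consistency Cases is checked by unfolding both sides of the required identity through (i), then applying (ii) to commute the $\mathcal X$-blocks into a common normal form. Finally, if the scheme did come from a term ${\bf t}_n$, then restricting each $M\models{\bf t}_n^{(ij)}\approx{\bf t}_{ij}$ to a two-variable alphabet $\{x_p,x_q\}$ forces $M\models{\bf t}_n[x_p,x_q]\approx\mathcal X_n[x_p,x_q]$ for every pair $\{p,q\}\ne\{1,n\}$, while the twisted definition of ${\bf t}_{1,n}$ forces $M\models{\bf t}_n[x_1,x_n]\approx\mathcal Y(x_n,x_1)$. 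This is exactly the configuration forbidden by (iv), contradicting the assumed existence of ${\bf t}_n$.

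The principal obstacle is the scheme construction itself: choosing, for each $\{i,j\}$, an explicit $*$-expression so that (a) Dependency and all three Consistency Cases can be verified uniformly from (i), (ii), and (iii), and simultaneously (b) the two-variable projections hit $\mathcal X_n$ on every non-corner pair while landing on $\mathcal Y(x_n,x_1)$ at $\{1,n\}$. The careful bookkeeping of the parity-driven alternation between $\mathcal X$-blocks and $\mathcal Y$-blocks under the various renamings $(ij),(kl),(jl),(il)$ required by Cases 1--3 is where the bulk of the work lies; the algebraic ingredients themselves are packaged in (i)--(iii), but piecing them into a coherent family of $n(n-1)/2$ terms is the delicate part.
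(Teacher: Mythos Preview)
Your overall strategy matches the paper's: build a scheme out of the $\mathcal X_n$, verify Dependency and Consistency from (i)--(iii), and then show that any parent term ${\bf t}_n$ would satisfy $M\models{\bf t}_n[x_i,x_j]\approx\mathcal X_n[x_i,x_j]$ for all $\{i,j\}\ne\{1,n\}$ while also $M\models{\bf t}_n[x_1,x_n]\approx\mathcal Y(x_n,x_1)$, contradicting (iv). But the actual construction is missing, and two of the details you do supply point in the wrong direction.

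The paper does \emph{not} form ${\bf t}_{ij}$ by deleting a variable from $\mathcal X_n$ and splitting via (i). Instead it uses a \emph{cyclic shift}: with $\mathcal Z=\mathcal X$ or $\mathcal Y$ according to the parity of $i$, one sets
\[
{\bf t}_{ij}\;=\;\mathcal Z^{\not{x_j}}(x_{i+1},\dots,x_n,x_1,\dots,x_{i-1})\,x_j^{\alpha}.
\]
Since $x_i$ is absent, Dependency is trivially satisfied; $A_{\alpha,\beta}$ is used not for Dependency but to handle the $x_j^\alpha$ tails when checking Consistency. The heart of the Consistency verification is a single identity, proved once from (i) and (ii): deleting $x_m$ from the cyclic word starting at $x_{s+1}$ yields the same thing in $M$ as deleting $x_s$ from the cyclic word starting at $x_{m+1}$. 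All three Cases then reduce to this.

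Crucially, the twist at $\{1,n\}$ is \emph{not} produced by a special choice of ${\bf t}_{1,n}$. The term ${\bf t}_{1,n}$ cannot constrain ${\bf t}_n[x_1,x_n]$ at all, since ${\bf t}_n^{(1,n)}$ already identifies $x_1$ with $x_n$. The obstruction must live in some ${\bf t}_{ij}$ with $\{i,j\}\cap\{1,n\}=\emptyset$, and in the paper it is the cyclic wrap-around that supplies it: for instance ${\bf t}_{n-2,n-1}=\mathcal Y(x_n,x_1,\dots,x_{n-3})\,x_{n-1}^\alpha$ places $x_n$ before $x_1$, so ${\bf t}_n^{(n-2,n-1)}\approx{\bf t}_{n-2,n-1}$ forces ${\bf t}_n[x_1,x_n]\approx\mathcal Y(x_n,x_1)$. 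Without the cyclic shift, your ``twist ${\bf t}_{1,n}$'' idea has no mechanism to create the required inconsistency, and this is the genuine gap in the proposal.
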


If we delete $x_1$ from both sides of \eqref{del1} then we obtain that for every even $r$, $M$ satisfies:
\begin{equation} \label{del4} \mathcal Y^{\not{y_1}} (x_2, \dots,x_r, y_1, \dots, y_n)  \approx  \mathcal Y(x_2, \dots, x_r) * \mathcal Y(y_2, \dots, y_n).\end{equation}
 
If we delete $x_1$ from both sides of \eqref{del22} then we obtain  that for every odd $r$, $M$ satisfies:
\begin{equation} \label{del3} \mathcal Y^{\not{y_1}} (x_2, \dots,x_r, y_1, \dots, y_n)  \approx  \mathcal Y(x_2, \dots, x_r) * \mathcal X (y_2,  \dots, y_n).\end{equation}

For each even $n \ge 2$ and $1 \le m \le n$ define \[\mathcal Z(x_{m+1}, \dots, x_n, x_1, \dots, x_{m-1}):=  \mathcal  X (x_{m+1}, \dots, x_n, x_1, \dots, x_{m-1})\] if $m$ is even and
\[\mathcal Z(x_{m+1}, \dots, x_n, x_1, \dots, x_{m-1}):=  \mathcal  Y (x_{m+1}, \dots, x_n, x_1, \dots, x_{m-1})\] if $m$ is odd.

\begin{lemma} \label{lemma} 

 Let $M$ be a monoid  that satisfies (i)--(iii) in Theorem~\ref{main}.
Fix even $n\ge 2$. Then the family of terms \[\{{\bf t}_{ij}=   \mathcal Z^{\not{x_j} }(x_{i+1}, \dots, x_n, x_1, \dots, x_{i-1}) x_j^\alpha \ \mid 1 \le i < j \le n\}\] is a scheme for $M$.

\end{lemma}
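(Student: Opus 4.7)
The plan is to verify the Dependency Condition and each of the three Consistency Conditions (Cases~1--3) separately. The Dependency Condition is immediate: in the listing $x_{i+1}, \dots, x_n, x_1, \dots, x_{i-1}$ the variable $x_i$ is absent, so $x_i$ does not appear in $\mathcal Z^{\not x_j}(\cdots)$; combined with the fact that it also does not appear in the suffix $x_j^\alpha$, substituting $x_j$ for $x_i$ leaves ${\bf t}_{ij}$ literally unchanged, so no identity of $M$ is needed.

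For the Consistency Condition, the central idea is to use the deletion identities \eqref{del1}, \eqref{del22} and their derived forms \eqref{del3}, \eqref{del4} to rewrite the factor $\mathcal Z^{\not x_j}(x_{i+1},\dots,x_{i-1})$ as a $*$-product
\[
\mathcal Z(x_{i+1},\dots,x_{j-1}) \,*\, \mathcal Z'(x_{j+1},\dots,x_{i-1}),
\]
where the choice of $\mathcal Z, \mathcal Z' \in \{\mathcal X,\mathcal Y\}$ is dictated by the parity of $i$ (which determines whether $\mathcal Z$ is $\mathcal X$ or $\mathcal Y$) and by the parity of the position of $x_j$ in the cyclic list (which selects between \eqref{del1}/\eqref{del22} or \eqref{del3}/\eqref{del4}). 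Having expressed ${\bf t}_{ij}$ and ${\bf t}_{kl}$ in this factored form and then applied the corresponding renaming, one uses the commutativity relation \eqref{com} to swap the two factors so that both sides of each consistency equation reduce to the same canonical arrangement.

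In Case~1 ($i,j,k,l$ pairwise distinct), the rename $x_k \mapsto x_l$ in ${\bf t}_{ij}$ does not touch the suffix $x_j^\alpha$, but after the rename the variable $x_l$ occurs $2d$ times inside the $\mathcal Z^{\not x_j}$ part; here condition~(iii), together with $A_{\alpha,\beta}$ and $\alpha \le 2d$, lets us collect those occurrences into a factor $x_l^\alpha$ (or absorb them entirely) so that the simplified expression matches the one obtained from ${\bf t}_{kl}^{(ij)}$ by the symmetric rewriting. In Cases~2 and~3 ($j=k$, or $i=k$ with $j<l$) the rename additionally changes the suffix $x_j^\alpha$ into $x_l^\alpha$ and/or doubles the occurrences of a shared variable; the same $A_{\alpha,\beta}$ together with the deletion identities collapses these extra powers, and the cyclic reorderings of the argument lists on the two sides are matched via \eqref{com}.

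The main obstacle is the parity bookkeeping: there are two parities for $i$, two parities for the position of $x_j$ in the cyclic list, and three consistency cases, so the verification splits into a sizable grid of subcases in which one must track whether $\mathcal X$ or $\mathcal Y$ appears in each factor and whether \eqref{del1}, \eqref{del22}, \eqref{del3}, or \eqref{del4} applies. Once the parities are tabulated, each subcase reduces to a short rewriting using one deletion identity, one application of \eqref{com}, and one invocation of $A_{\alpha,\beta}$ to normalize the trailing power, so no case presents a genuine algebraic difficulty beyond careful indexing.
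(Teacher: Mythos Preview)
Your plan uses the same ingredients as the paper --- the deletion identities \eqref{del1}--\eqref{del4}, the commutativity \eqref{com}, and $A_{\alpha,\beta}$ --- and your dependency check is identical. The paper organises the work differently: it first isolates a single auxiliary identity
\[
\mathcal Z^{\not x_m}(x_{s+1},\dots,x_n,x_1,\dots,x_{s-1}) \;\approx\; \mathcal Z^{\not x_s}(x_{m+1},\dots,x_n,x_1,\dots,x_{m-1})
\qquad(1\le s,m\le n),
\]
proves it in three parity cases (for $s,m$) by factoring at the deleted variable, swapping once via \eqref{com}, and un-factoring; every consistency case then follows from one instance of this identity together with $A_{\alpha,\beta}$ (and, for Case~1, substituting $1$ for $x_j$ and $x_l$ afterward).

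Your inlined version is fine for Cases~2 and~3, but the description of Case~1 contains a small misdirection. Factoring $\mathcal Z^{\not x_j}(x_{i+1},\dots,x_{i-1})$ at $x_j$ realises the cyclic rotation $i\rightsquigarrow j$, whereas Case~1 needs the rotation $i\rightsquigarrow k$: after collecting the $2d$ copies of $x_l$ into $x_l^\alpha$ via $A_{\alpha,\beta}$, the two sides are
$\mathcal Z^{\not x_j,\not x_k,\not x_l}(\text{cyclic at }i)\,x_j^\alpha x_l^\alpha$ and $\mathcal Z^{\not x_l,\not x_i,\not x_j}(\text{cyclic at }k)\,x_l^\alpha x_j^\alpha$,
and matching them is exactly the displayed identity with $(s,m)=(i,k)$ on the full $n$-list, followed by deletion of $x_j$ and $x_l$ from both sides. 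A single split at $x_j$ does not accomplish this (you would need further splits at $x_k$ and $x_l$ and several commutations, not ``one deletion identity, one application of \eqref{com}''). With the split taken at $x_k$ instead, your outline coincides with the paper's argument.
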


\begin{proof}  
First, let us verify that for every even $n$ and $1 \le s \le n, 1 \le  m \le n$, $M$ satisfies
\begin{equation} \label{ms} \mathcal Z^{\not{x_m} }(x_{s+1}, \dots, x_n, x_1, \dots, x_{s-1})  \approx \mathcal  Z^{\not{x_s} } (x_{m+1}, \dots, x_n, x_1, \dots, x_{m-1}).\end{equation}

{\bf Case 1}: If both $s$  and $m$ are odd, we need to verify that
\[M \models \mathcal Y^{\not{x_m} }(x_{s+1}, \dots, x_n, x_1, \dots, x_{s-1})  \approx \mathcal  Y^{\not{x_s} } (x_{m+1}, \dots, x_n, x_1, \dots, x_{m-1}).\]
It enough to assume that $s=1$ and $1 < m < n$.

\[M \models \mathcal Y^{\not{x_m} }(x_2, \dots, x_n) \stackrel{\eqref{del4}}{\approx} \]
\[ \mathcal Y(x_2, \dots, x_{m-1}) * \mathcal Y(x_{m+1}, \dots, x_n)   \stackrel{\eqref{com}}{\approx}   \]
\[\mathcal Y(x_{m+1}, \dots, x_n)* \mathcal Y( x_2, \dots, x_{m-1})  \stackrel{\eqref{del4}}{\approx}   \]
\[\mathcal  Y^{\not{x_1} } (x_{m+1}, \dots, x_n, x_1, x_2, \dots, x_{m-1}).\]

{\bf Case 2}: If both $s$  and $m$ are even, we need to verify that
\[M \models \mathcal X^{\not{x_m} }(x_{s+1}, \dots, x_n, x_1, \dots, x_{s-1})  \approx \mathcal  X^{\not{x_s} } (x_{m+1}, \dots, x_n, x_1, \dots, x_{m-1}).\]
It enough to assume that $s=2$ and $2 < m \le n$.

\[M \models \mathcal X^{\not{x_m} }(x_3, \dots, x_n, x_1)  \stackrel{\eqref{del22}}{\approx} \]
\[ \mathcal X (x_3, \dots, x_{m-1}) * \mathcal X (x_{m+1}, \dots, x_n, x_1)   \stackrel{\eqref{com}}{\approx}   \]
\[\mathcal X (x_{m+1}, \dots, x_n, x_1) * \mathcal X ( x_3, \dots, x_{m-1})  \stackrel{\eqref{del22}}{\approx}   \]
\[\mathcal  X^{\not{x_2} } (x_{m+1}, \dots, x_n, x_1, x_2, x_3, \dots, x_{m-1}).\]

{\bf Case 3}: If  $s$  is odd and $m$ is even, we need to verify that
\[\mathcal Y^{\not{x_m} }(x_{s+1}, \dots, x_n, x_1, \dots, x_{s-1})  \approx \mathcal  X^{\not{x_s} } (x_{m+1}, \dots, x_n, x_1, \dots, x_{m-1}).\]

If we  assume that $s=1$ and $1 < m \le n$ then:
\[M \models \mathcal Y^{\not{x_m} }(x_2, \dots, x_n)  \stackrel{\eqref{del4}}{\approx} \]
\[ \mathcal Y(x_2, \dots, x_{m-1}) * \mathcal X (x_{m+1}, \dots, x_n)   \stackrel{\eqref{com}}{\approx}   \]
\[\mathcal X (x_{m+1}, \dots, x_n) * \mathcal Y( x_2, \dots, x_{m-1})  \stackrel{\eqref{del1}}{\approx}   \]
\[\mathcal  X^{\not{x_1} } (x_{m+1}, \dots, x_n, x_1, x_2, \dots, x_{m-1}).\]

If we assume that $m=2$ and $2 < s < n$ then:
\[M \models \mathcal X^{\not{x_s} }(x_3, \dots, x_n, x_1)  \stackrel{\eqref{del1}}{\approx} \]
\[ \mathcal X (x_3, \dots, x_{s-1}) * \mathcal Y(x_{s+1}, \dots, x_n, x_1)   \stackrel{\eqref{com}}{\approx}   \]
\[\mathcal Y(x_{s+1}, \dots, x_n, x_1) * \mathcal X (x_3, \dots, x_{s-1}) \stackrel{\eqref{del3}}{\approx}   \]
\[\mathcal  Y^{\not{x_2} } (x_{s+1}, \dots, x_n, x_1, x_2, x_3, \dots, x_{s-1}).\]

Now we use $A_{\alpha, \beta}$ and \eqref{ms} to verify that for each even $n \ge 2$ the family of terms \[\{{\bf t}_{ij}=   \mathcal Z^{\not{x_j} }(x_{i+1}, \dots, x_n, x_1, \dots, x_{i-1}) x_j^\alpha \ \mid 1 \le i < j \le n\}\] is a scheme for $M$.
Indeed, the dependency condition is trivially satisfied. To verify the consistency consider the three cases.

{\bf Case 1}: If $i, j, k, l$ are pairwise disjoint then
 \[M \models {\bf t}^{(kl)}_{ij} \approx   \mathcal Z^{\not{x_j}, \not{x_k}, \not{x_l}} (x_{i+1}, \dots, x_n, x_1, \dots, x_{i-1}) x_j^\alpha x_l^\alpha
\stackrel{\eqref{ms}}{\approx} \] 
\[\mathcal Z^{\not{x_l}, \not{x_i}, \not{x_j}}(x_{k+1}, \dots, x_n, x_1, \dots, x_{k-1}) x_l^\alpha x_j^\alpha \approx  {\bf t}^{(ij)}_{kl}.\]

{\bf Case 2}: If $j=k$ then   ${\bf t}_{jl} =  \mathcal Z^{\not{x_l} } (x_{j+1}, \dots, x_n, x_1, \dots, x_{j-1}) x_l^\alpha$.
Hence
\[M \models {\bf t}^{(jl)}_{ij} \approx   \mathcal Z^{\not{x_j}, \not{x_l} } (x_{i+1}, \dots, x_n, x_1, \dots, x_{i-1}) x_l^\alpha \stackrel{\eqref{ms}}{\approx} \] 
\[ \mathcal Z^{\not{x_l}, \not{x_i}} (x_{j+1}, \dots, x_n, x_1, \dots, x_{j-1}) x_l^\alpha \approx  {\bf t}^{(il)}_{jl}.\]

{\bf Case 3}: If $i=k$ and $j<l$ then  ${\bf t}_{il} =  \mathcal X^{\not{x_l} }\{x_{i+1}, \dots, x_n, x_1, \dots, x_{i-1}\} x_l^\alpha$.
 Hence
\[M \models {\bf t}^{(jl)}_{ij} \approx   \mathcal Z^{\not{x_j}, \not{x_l} } (x_{i}, \dots, x_n, x_1, \dots, x_{i-1}) x_l^\alpha \approx  {\bf t}^{(jl)}_{il}.\]
\end{proof}

\begin{obs} \label{O} 
Let $M$ be a monoid, $n \in \mathbb N$ and $\mathcal X_n = \mathcal X(x_1, \dots, x_n)$
be an $n$-ary word such that 
Condition (i) of Theorem~\ref{main} is satisfied. Then:

(i) for each odd $1\le i < n$, we have \[M \models \mathcal X_n [x_i, x_{i+1}] \approx \mathcal X(x_i, x_{i+1}),\]
 and for each even $2 \le i < n$, we have
\[M \models \mathcal X_n [x_i, x_{i+1}] \approx \mathcal Y(x_i, x_{i+1});\]

(ii) If $\occ(x_1, \mathcal X_2) = \occ(x_2, \mathcal X_2) = d$ then
for each $i \in \mathbb N$ we have $\occ(x_i, \mathcal X_n)=d$ and 
 for each  $1 \le i < j \le n$ with $j - i \ge 2$ we have 
\[M \models \mathcal X_n [x_i, x_{j}] \approx  x_i^d * x_j^d.\]

\end{obs}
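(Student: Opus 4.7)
I would prove part (i) by strong induction on $n$, deducing part (ii) from it. The base cases $n = 2$ and $(n, i) = (3, 2)$ are direct: $\mathcal X_2[x_1, x_2] = \mathcal X(x_1, x_2)$ by definition, and $\mathcal X_3[x_2, x_3] = \mathcal X_3^{\not x_1} = \mathcal Y_2(x_2, x_3)$ by the definition of $\mathcal Y$. For the inductive step with $n \ge 3$, the strategy is to apply Condition (i) of Theorem~\ref{main} to delete one variable from $\mathcal X_n$, substitute the remaining superfluous variables by $1 \in M$, and reduce to a smaller instance.

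If $i + 1 < n$, I delete $x_n$ via \eqref{del1} or \eqref{del22} (according to the parity of $n - 1$) taken with $r = n - 1$ and a single trailing $y$-variable renamed to $x_n$. This gives $M \models \mathcal X_n^{\not x_n} \approx \mathcal X(x_1, \ldots, x_{n-1}) * W_0$ where $W_0 \in \{\mathcal X_0, \mathcal Y_0\}$ is $0$-ary; projecting both sides to $\{x_i, x_{i+1}\}$ makes $W_0$ evaluate to $1$ and yields $M \models \mathcal X_n[x_i, x_{i+1}] \approx \mathcal X_{n-1}[x_i, x_{i+1}]$, closing this subcase by the inductive hypothesis. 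If instead $i = n - 1 \ge 3$, I delete $x_2$ via \eqref{del22} with $r = 1$, obtaining $M \models \mathcal X_n^{\not x_2} \approx \mathcal X(x_1) * \mathcal X(x_3, \ldots, x_n)$. Projecting to $\{x_{n-1}, x_n\}$ collapses $\mathcal X(x_1)$ to $1$ and reduces the right factor to $\mathcal X_{n-2}[x_{n-1}, x_n]$, which in $\mathcal X_{n-2}$'s internal indexing (where $x_3$ is position $1$) is the projection onto the last adjacent pair at index $n - 3$; since the parity of $n - 3$ matches that of $i = n - 1$, the inductive hypothesis applied to $\mathcal X_{n-2}$ delivers the correct target.

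For part (ii), the first assertion $\occ(x_i, \mathcal X_n) = d$ follows by substituting $x_{i+1} = 1$ in the identity from (i): the left side becomes $x_i^{\occ(x_i, \mathcal X_n)}$, while the right side becomes $\mathcal X_2(x_i, 1) = x_i^{\occ(x_1, \mathcal X_2)} = x_i^d$ when the target is $\mathcal X$, or $\mathcal Y_2(x_i, 1) = x_i^{\occ(x_2, \mathcal X_3)}$ when the target is $\mathcal Y$; in the latter case a subsidiary application of (i) to $\mathcal X_3$ with $i = 1$ (followed by $x_1 = 1$) gives $\occ(x_2, \mathcal X_3) = d$ in $M$. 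Combined with $A_{\alpha, \beta}$ and the assumption $\alpha \le 2d$, this pins down $\occ(x_i, \mathcal X_n) = d$. For the second assertion, I apply Condition (i) of Theorem~\ref{main} with $r = i$ and $y_1 = x_{i+1}$: this gives $M \models \mathcal X_n^{\not x_{i+1}} \approx \mathcal X(x_1, \ldots, x_i) * W(x_{i+2}, \ldots, x_n)$ with $W \in \{\mathcal X_{n-i-1}, \mathcal Y_{n-i-1}\}$; projecting to $\{x_i, x_j\}$ (valid since $j - i \ge 2$ ensures $x_j \in \{x_{i+2}, \ldots, x_n\}$) reduces the two factors to $x_i^d$ and $x_j^d$ via the first part of (ii), giving $M \models \mathcal X_n[x_i, x_j] \approx x_i^d * x_j^d$.

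The main technical obstacle is the $i = n - 1$ subcase in (i), where the convenient ``delete $x_n$'' reduction is unavailable; the alternative $x_2$-deletion shortens $n$ to $n - 2$ and introduces an index shift in $\mathcal X_{n-2}$ that must be reconciled through careful parity bookkeeping. A secondary point is justifying that $*$-products with $0$-ary factors absorb to $1$ under projection in $M$, which can be verified from Condition (i) of Theorem~\ref{main} at small parameters.
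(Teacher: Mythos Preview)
The paper states this observation without proof. Your inductive approach to part~(i) and to the second assertion of~(ii) is reasonable, modulo the tacit assumption that deleting all variables of $\mathbf v$ from the word $\mathbf u * \mathbf v$ leaves something $M$-equivalent to the corresponding deletion applied to~$\mathbf u$; this holds for every concrete $*$ used in the paper (concatenation, the double-linear $*$, the wrapping $\odot$) but is not literally a consequence of Condition~(i), and your remark that it ``can be verified from Condition~(i) at small parameters'' does not actually go through.

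The substantive gap is your argument for $\occ(x_i,\mathcal X_n)=d$. This is a combinatorial assertion about the word $\mathcal X_n$, not an identity of $M$, so it cannot be extracted from $M \models x_i^{\occ(x_i,\mathcal X_n)} \approx x_i^{d}$. You invoke $A_{\alpha,\beta}$ and $\alpha\le 2d$, but those belong to Condition~(iii), which the Observation does not assume; and even granting them, $A_{\alpha,\beta}$ only asserts that certain powers \emph{coincide} in $M$ --- it never separates distinct exponents --- so it cannot ``pin down'' an integer occurrence count. In fact the assertion is false under the stated hypotheses alone: with $M$ the trivial monoid Condition~(i) holds vacuously for, say, $\mathcal X_n=(x_1\cdots x_n)^n$, yet $d=\occ(x_1,\mathcal X_2)=2$ while $\occ(x_1,\mathcal X_3)=3$. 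What the paper actually relies on (cf.\ the proof of Proposition~\ref{P: S}: ``take $M$ to be the free monoid'') is that in the intended applications Condition~(i) holds as literal equality of words; once that is available, $\occ(x_i,\mathcal X_n)=d$ follows by a direct combinatorial induction on~$n$, with no appeal to $A_{\alpha,\beta}$.
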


\begin{proof}[Proof of Theorem~\ref{main}]
Take even $n\ge 6$. 
Suppose that  the $n$-ary scheme \[\{{\bf t}_{ij}=   \mathcal Z^{\not{x_j} }(x_{i+1}, \dots, x_n, x_1, \dots, x_{i-1}) x_j^\alpha \ \mid 1 \le i < j \le n\}\] comes from a term ${\bf t}_n ={\bf t} (x_1, \dots, x_n)$ for $V(M)$, that is,
\[M \models {\bf t}_n^{(sm)} \approx  \mathcal Z^{\not{x_m} }(x_{s+1}, \dots, x_n, x_1, \dots, x_{s-1}) x_m^\alpha \] for each $1 \le s<m \le n$.
In particular, 
\[M \models {\bf t}_n^{(n-1, n)} \approx  \mathcal X(x_1, \dots, x_{n-2}) x_n^\alpha.\] 
Using  \eqref{del22}  to delete extra variables, we obtain
\[M \models {\bf t}_n[x_i, x_{i+1}] \approx  \mathcal X(x_i, x_{i+1})  \stackrel{Obs.~\ref{O}}{\approx}
\mathcal X_n[x_i, x_{i+1}] \]
for each $i=1, 3, \dots, n-3$.

Using  \eqref{del1}  to delete extra variables, we obtain
\[M \models {\bf t}_n[x_i, x_{i+1}] \approx  \mathcal Y(x_i, x_{i+1})  \stackrel{Obs.~\ref{O}}{\approx}
\mathcal X_n[x_i, x_{i+1}] \]
for each $i=2, 4, \dots, n-4$.

Also, 
\[M \models {\bf t}_n^{(1, 2)} \approx  \mathcal X(x_3, \dots, x_{n}) x_2^\alpha.\] 
Using  \eqref{del22}  to delete extra variables, we obtain
\[M \models {\bf t}_n[x_{n-1}, x_n] \approx  \mathcal X(x_{n-1}, x_{n})  \stackrel{Obs.~\ref{O}}{\approx}
\mathcal X_n[x_{n-1}, x_{n}].\]
Using  \eqref{del1}  to delete extra variables, we obtain
\[M \models {\bf t}_n[x_{n-2}, x_{n-1}] \approx  \mathcal Y(x_{n-2}, x_{n-1})  \stackrel{Obs.~\ref{O}}{\approx}
\mathcal X_n[x_{n-2}, x_{n-1}].\]

Now fix $2 \le i < j \le n$ with $j-i \ge 2$. 
Then  
\[M \models {\bf t}^{(i-1, i+1)} \approx  \mathcal Z^{\not{x_{i+1}} }(x_{i}, x_{i+1}, x_{i+2}, \dots, x_n, x_1, \dots, x_{i-2}) x_{i+1}^\alpha  \stackrel{\eqref{del22} or \eqref{del4}}{\approx}\]
\[[\mathcal Z(x_i) * \mathcal Z(x_{i+2}, \dots, x_n, x_1, \dots, x_{i-2})] x_{i+1}^\alpha.\]
If we delete everything except for $\{x_i, x_j\}$  from both sides then we obtain
\[M \models {\bf t}[x_i,x_j] \approx  x^d_i * x^d_j  \stackrel{Obs.~\ref{O}}{\approx}
\mathcal X_n[x_{i}, x_{j}].\]

Finally,  
\[M \models {\bf t}_n^{(2, n)} \approx  \mathcal X^{\not{x_n}}(x_3, x_4, \dots, x_{n-1}, x_n, x_1) x_n^\alpha
 \stackrel{\eqref{del1}}{\approx}\]
\[ [\mathcal X(x_{3}, x_4 \dots, x_{n-1})* \mathcal X(x_1)] x_{n}^\alpha.\]
Deleting extra variables we obtain
 \[M \models {\bf t}[x_1, x_j] \approx x_j^d * x_1^d \stackrel{\eqref{com}}{\approx}  x_1^d * x_j^d  \stackrel{Obs.~\ref{O}}{\approx}
\mathcal X_n[x_1, x_{j}]\]
for each $3 \le j \le n-1$.

Overall, we conclude that
 $M \models {\bf t}_n[x_i,x_{j}] \approx \mathcal X_n[x_i,x_{j}]$ for each $\{i,j\} \ne \{1,n\}$.

Now consider 
\[M \models {\bf t}_n^{(n-2, n-1)} \approx  \mathcal Y(x_n,  x_1, x_2, \dots, x_{n-3}) x_{n-1}^\alpha.\] 
Using  \eqref{del1}  to delete extra variables, we obtain
\[M \models {\bf t}_n[x_1 ,x_n] \approx \mathcal Y(x_n, x_1).\]
Since this contradicts Condition (iv), such a term ${\bf t}_n$ is impossible. Therefore, the scheme does not come from any $n$-ary term, and $M$ is non-finitely related
by Lemma~\ref{L: nfr}.
\end{proof}

%%%%%%%%%%%%%%%%%%%%%%%%%%%%%%%%%%%%%%%%%%%%%%%%%%%%%%%%%%%%%%%%%%

\section{Chain, crown, maelstrom and other interlocking words}
\label{sec: W}

To show how new interlocking words $\mathcal S_n$ are similar
to the chain $\mathcal C_n$, maelstrom $\mathcal M_n$ and crown words $\mathcal R_n$,
we define all these words recursively as follows.

\begin{definition} \label{D: words}
 $\mathcal C_0 = \mathcal M_0 = \mathcal R_0 =  \mathcal S_0 = 1, \mathcal C_1 = \mathcal M_1 = \mathcal R_1 =  \mathcal S_1 = x_1^2$.

\begin{itemize}

\item{If $\mathcal C_k = {\bf c}_k \cdot x_k$ then $\mathcal C_{k+1} = {\bf c}_k \cdot x_{k+1} \cdot  x_k \cdot x_{k+1}$.}

\item{ If $k$ is odd and $\mathcal M_k = {\bf m}_k \cdot x_k$ then $\mathcal M_{k+1} = x_{k+1}\cdot {\bf m}_k \cdot x_{k+1} \cdot x_k$;

If $k$ is even and $\mathcal M_k =x_{k} \cdot {\bf m}'_k$ then $\mathcal M_{k+1} = x_k \cdot x_{k+1}\cdot {\bf m}'_k \cdot x_{k+1}$.}

\item{ If $k$ is odd and $\mathcal R_k = {\bf r}_k \cdot x_k$ then $\mathcal R_{k+1} = {\bf r}_k \cdot x^2_{k+1} \cdot  x_k$;

If $k$ is even and $\mathcal R_k = {\bf r}'_k \cdot  x^2_k x_{k-1}$ then $\mathcal R_{k+1} = {\bf r}'_k \cdot x_{k+1} \cdot x^2_k x_{k-1} \cdot x_{k+1}$.}

\item{ If $k$ is odd and $\mathcal S_k = (x_{k-1} x_k \cdot {\bf s}_k) ( x_k \cdot  {\bf s}'_k)$ then \\ $\mathcal S_{k+1} =( x_{k+1} \cdot x_{k-1} x_k \cdot {\bf s}_k) ( x_k \cdot  x_{k+1} \cdot {\bf s}'_k)$;

If $k$ is even and $\mathcal S_k = (x_k \cdot {\bf s}'_k) (x_{k-1}  x_k  \cdot  {\bf s}_k)$ then \\ $\mathcal S_{k+1} =   (x_k  \cdot x_{k+1} \cdot {\bf s}'_k )  (x_{k+1}  \cdot  x_{k-1}    x_k \cdot {\bf s}_k)$.}

\end{itemize}
\end{definition}

For example,
\[\mathcal S_8 = ( x_8 x_6 x_7 x_4 x_5 x_2 x_3 x_1) ( x_7 x_8 x_5 x_6 x_3 x_4 x_1 x_2).\]

We say that a word $\bf u$ is {\em double-linear} if  ${\bf u} = (x_1 \dots x_n)(x_{\sigma(1)}, \dots, x_{\sigma(n)})$ for some $n>0$ and a permutation $\sigma$ on
$\{1, \dots, n\}$. Notice that maelstrom $\mathcal M_n$ and the new words $\mathcal S_n$ are double-linear. 
For the rest of this note we fix $*$ to be the following operation on double-linear words:

$\bullet$ If ${\bf u}$ and $\bf v$ are double linear then ${\bf u} * {\bf v}: = {\bf u}_1 {\bf v}_1{\bf u}_2{\bf v}_2$, where
${\bf u}_1$, ${\bf u}_2$, ${\bf v}_1$ and ${\bf v}_2$ are linear words such that  ${\bf u} = {\bf u}_1 {\bf u}_2$ and  ${\bf v} = {\bf v}_1 {\bf v}_2$.

Notice that  $\mathcal S_{k+1}$ is obtained from $\mathcal S_{k}$ by inserting  ${{_1x}_{k+1}}$ before the first linear part of  $\mathcal S_{k}$
and inserting  ${{_2x}_{k+1}}$ immediately to the right of the first letter in the second linear part of  $\mathcal S_{k}$ when $k$ is odd, and inserting them vice versa when $k$ is even. Having this in mind, we give three more definitions to the words $\mathcal S_n$.
First, recall that \[\mathcal Y (x_2, \dots,  x_n):= \mathcal S^{\not{x_1}} (x_1, x_2, \dots,  x_n).\]

\begin{prop} \label{P: S} Let $n \ge 3$ and $\mathcal S_n = \mathcal S(x_1, \dots, x_n)$  be an $n$-ary word such that $\mathcal S_n [x_1, x_2, x_3]= (x_2 x_3 x_1)(x_3 x_1 x_2)$.
Then the following are equivalent:

(i)  
\[\mathcal S^{\not{x_m}} (x_1, \dots,  x_n)  =  \mathcal S(x_1, \dots, x_{m-1}) * \mathcal Y (x_{m+1}, \dots, x_n)\] for each odd  $1 \le m \le n$, and
 \[\mathcal S^{\not{x_m}} (x_1, \dots,  x_n)  =  \mathcal S(x_1, \dots, x_{m-1})  * \mathcal S (x_{m+1}, \dots, x_n)\] for each even  $1 \le m \le n$.

(ii) for each odd $1 \le i < n$ we have $\mathcal S_n[x_i, x_{i+1}] = x_{i+1} x^2_{i} x_{i+1}$, for each
even $2 \le i < n$ we have $\mathcal S_n[x_i, x_{i+1}] = x_{i} x^2_{i+1} x_{i}$,
and for each $1 \le i < j \le n$ with $j-i \ge 2$ we have $\mathcal S_n[x_i, x_{j}] = x_j^2 * x_i^2 = x_j x_i x_j x_i$.

(iii) for each odd $1 \le i < n$ we have 
$\mathcal S_n[x_i, x_{i+1}] = x_{i+1} x^2_{i} x_{i+1}$, for each
even $2 \le i < n$ we have $\mathcal S_n[x_i, x_{i+1}] = x_{i} x^2_{i+1} x_{i}$,
for each $1 \le i < j \le n$ with $j-i =2$ or  $j-i =3$ we have $\mathcal S_n[x_i, x_{j}]  \in \{ x_i x_j x_i x_j, x_j x_i x_j x_i\}$,
and for each odd $3 \le j \le n$ we have  $\mathcal S_n[x_1, x_{j}]  \in \{ x_1 x_j x_1 x_j, x_j x_1 x_j x_1\}$.

(iv) $\mathcal S_n$ is as in Definition~\ref{D: words}.
\end{prop}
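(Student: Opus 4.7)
The plan is to prove the cycle of implications $(\mathrm{iv}) \Rightarrow (\mathrm{i}) \Rightarrow (\mathrm{ii}) \Rightarrow (\mathrm{iii}) \Rightarrow (\mathrm{iv})$. The first two are direct computations from the recursive description in Definition~\ref{D: words}, the third is an immediate weakening of pair data, and the last is a uniqueness argument. I would induct on $n$ throughout, with base $n = 3$ supplied by the standing hypothesis $\mathcal{S}_n[x_1, x_2, x_3] = (x_2 x_3 x_1)(x_3 x_1 x_2)$.

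For $(\mathrm{iv}) \Rightarrow (\mathrm{i})$: the recursive rule passes from $\mathcal{S}_k$ to $\mathcal{S}_{k+1}$ by inserting one copy of $x_{k+1}$ at the start of one linear half of $\mathcal{S}_k$ and one copy as the second letter of the other half, the choice between halves being dictated by the parity of $k$. Deletion of $x_m$ for $m \le k$ commutes with this insertion, so $\mathcal{S}_{k+1}^{\not{x_m}}$ is obtained from $\mathcal{S}_k^{\not{x_m}}$ by inserting the two copies of $x_{k+1}$ in the same slots. By the inductive hypothesis the latter already admits the $*$-factorisation claimed in $(\mathrm{i})$, and the two new copies of $x_{k+1}$ land entirely inside the right-hand factor, upgrading the $\mathcal{S}$ or $\mathcal{Y}$ on $x_{m+1}, \dots, x_k$ to the corresponding word on $x_{m+1}, \dots, x_{k+1}$ via the same recursion. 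The alternation between $\mathcal{S}$ and $\mathcal{Y}$ on the right-hand factor, according to the parity of $m$, reflects exactly the parity rule that governs the insertion.

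For $(\mathrm{i}) \Rightarrow (\mathrm{ii})$: apply $(\mathrm{i})$ iteratively to remove all variables except a chosen pair $\{x_i, x_j\}$. Each deletion introduces a new $*$-factorisation, and after sufficiently many steps only two variables remain. For $j - i \ge 2$, the iterated shuffle of singleton factors produces $x_i^2 * x_j^2 = x_i x_j x_i x_j$, with the roles of the two variables possibly swapped; the choice is pinned down by a parity check on the deletion sequence. For consecutive pairs, the final deletion reaches the base case $\mathcal{S}_n[x_1, x_2, x_3]$, yielding $x_{i+1} x_i^2 x_{i+1}$ or $x_i x_{i+1}^2 x_i$ according to the parity of $i$. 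Then $(\mathrm{ii}) \Rightarrow (\mathrm{iii})$ is immediate since $(\mathrm{iii})$ lists a strict subset of the pair data in $(\mathrm{ii})$.

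The main obstacle is $(\mathrm{iii}) \Rightarrow (\mathrm{iv})$. I would induct on $n$: the pair data supplied by $(\mathrm{iii})$ for indices $\le n-1$ is exactly the instance of $(\mathrm{iii})$ for the $(n-1)$-ary restriction $\mathcal{S}_n^{\not{x_n}}$, so the inductive hypothesis gives $\mathcal{S}_n^{\not{x_n}} = \mathcal{S}_{n-1}$ in the sense of Definition~\ref{D: words}. It then suffices to show that the two copies of $x_n$ in $\mathcal{S}_n$ lie in precisely the positions prescribed by the recursion. The restrictions $\mathcal{S}_n[x_{n-1}, x_n]$, $\mathcal{S}_n[x_{n-2}, x_n]$, $\mathcal{S}_n[x_{n-3}, x_n]$, and (when $n$ is odd) $\mathcal{S}_n[x_1, x_n]$ supplied by $(\mathrm{iii})$ pin down each occurrence of $x_n$ relative to a few nearby variables, and the already-known shape of $\mathcal{S}_{n-1}$---in particular the identity of the first letters of its two linear halves---turns these relative positions into absolute ones. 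The subtle point is that $(\mathrm{iii})$ supplies only three or four pair restrictions involving $x_n$, so one must verify that no alternative insertion of $x_n$ is compatible with both the inductive shape of $\mathcal{S}_{n-1}$ and all of these pair constraints simultaneously; I expect a parity-driven case analysis here, and this is the technical heart of the argument.
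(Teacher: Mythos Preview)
Your proposal is correct and follows essentially the same route as the paper: the same cycle $(\mathrm{iv})\Rightarrow(\mathrm{i})\Rightarrow(\mathrm{ii})\Rightarrow(\mathrm{iii})\Rightarrow(\mathrm{iv})$, with $(\mathrm{i})\Rightarrow(\mathrm{ii})$ handled by the iterated-deletion observation (the paper packages this as Observation~\ref{O} with $M$ the free monoid), and the substantive step $(\mathrm{iii})\Rightarrow(\mathrm{iv})$ done by induction on $n$, pinning down the two occurrences of $x_n$ inside the already-known $\mathcal S_{n-1}$ via the pair restrictions $[x_{n-1},x_n]$, $[x_{n-2},x_n]$, $[x_{n-3},x_n]$, and, when $n$ is odd, $[x_1,x_n]$. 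The paper's case analysis in $(\mathrm{iii})\Rightarrow(\mathrm{iv})$ is exactly the ``parity-driven case analysis'' you anticipate as the technical heart.
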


\begin{proof} 
 Clearly, the statement holds for $n=3$. Suppose that it holds for each $3 \le n \le k$. Take $n=k+1$. 

Implication (i) $\rightarrow$(ii) is by  Observation~\ref{O} (take $M$ to be the free monoid).

Implication (ii)$\rightarrow$(iii) is evident.

(iii)$\rightarrow$(iv) By induction hypothesis, $\mathcal S_k$  is as in Definition~\ref{D: words}.

If $k$ is odd then $\mathcal S_k$ begins with $x_{k-1} x_k$ and the second linear part of
$\mathcal S_k$ begins with $x_{k} x_{k-2}$.
Since $\mathcal S_{k+1}[x_k, x_{k+1}] = x_{k+1} x^2_{k} x_{k+1}$, either
 ${_1x}_{k+1}$ is the first in $\mathcal S_{k+1}$ or we have
\[({_1x}_{k-1}) \ll_{\bf u} ({_1x}_{k+1}) \ll_{\bf u} ({_1x}_{k}),\]
where ${\bf u} =\mathcal S_{k+1}$. In both cases we must have
 $\mathcal S_{k+1}[x_{k-2}, x_{k+1}] = x_{k+1} x_{k-2} x_{k+1} x_{k-2}$.
Hence the only possibility for the second occurrence of $x_{k+1}$ is:
\[({_2x}_{k}) \ll_{\bf u} ({_2x}_{k+1}) \ll_{\bf u} ({_2x}_{k-2}),\]
where ${\bf u} =\mathcal S_{k+1}$. 
Then we must have $\mathcal S_{k+1}[x_{k-1}, x_{k+1}] = x_{k+1} x_{k-1} x_{k+1} x_{k-1}$.
Therefore, the letter ${_1x}_{k+1}$ must be the first in $\mathcal S_{k+1}$.
The resulting word is $\mathcal S_{k+1}$ as in  Definition~\ref{D: words}.

If $k$ is even then $\mathcal S_k$ begins with $x_{k} x_{k-2}$ and
the second linear part of $\mathcal S_k$ begins with $x_{k-1}x_k$.
Since $\mathcal S_{k+1}[x_k, x_{k+1}] = x_{k} x^2_{k+1} x_{k}$ 
and
$\mathcal S_{k+1}[x_{k-2}, x_{k+1}] = x_{k+1} x_{k-2} x_{k+1} x_{k-2}$,
the only possibility for the first occurrence of $x_{k+1}$ is:
\[({_1x}_{k}) \ll_{\bf u} ({_1x}_{k+1}) \ll_{\bf u} ({_1x}_{k-2}),\]
where ${\bf u} =\mathcal S_{k+1}$. Then $\mathcal S_{k+1}[x_{k-1}, x_{k+1}] = x_{k+1} x_{k-1} x_{k+1} x_{k-1}$.
Therefore, the second occurrence of $x_{k+1}$ must 
precede  the second occurrence of $x_{k-1}$. Then
 $\mathcal S_{k+1}[x_{1}, x_{k+1}] = x_{k+1} x_{1} x_{k+1} x_{1}$.
This leaves only one possibility for the second occurrence of $x_{k+1}$:
\[({_1x}_{1}) \ll_{\bf u} ({_2x}_{k+1}) \ll_{\bf u} ({_2x}_{k-1}),\]
where ${\bf u} =\mathcal S_{k+1}$. 
The resulting word $\mathcal S_{k+1}$ as in  Definition~\ref{D: words}.

Implication (iv)$\rightarrow$(i) is a routine.
 \end{proof}

Let $\bar{*}$ denote the operation on the double-linear words which is dual to the operation $*$.
Here are  four definitions of  words $\bar{\mathcal S}_n$.

\begin{definition} \label{D: S1} Let $n \ge 3$ and $\bar{\mathcal S}_n = \bar{\mathcal S}(x_1, \dots, x_n)$  be an $n$-ary word such that $\bar{\mathcal S}_n [x_1, x_2, x_3]= (x_2  x_1 x_3)(x_1 x_3 x_2)$.
Then the following are equivalent:

(i)  \[\bar{\mathcal S}^{\not{x_m}} (x_1, \dots,  x_n)  =  \bar{\mathcal S} (x_1, \dots, x_{m-1}) \bar{*} \mathcal Y (x_{m+1}, \dots, x_n)\] for each odd  $1 \le m \le n$, and \[\bar{\mathcal S}^{\not{x_m}} (x_1, \dots,  x_n)  =  \bar{\mathcal S} (x_1, \dots, x_{m-1})  \bar{*} \bar{\mathcal S} (x_{m+1}, \dots, x_n)\] for each even  $1 \le m \le n$.

(ii) for each odd $1 \le i < n$ we have $\bar{\mathcal S}_n[x_i, x_{i+1}] = x_{i+1} x^2_{i} x_{i+1}$, for each
even $2 \le i < n$ we have $\bar{\mathcal S}_n[x_i, x_{i+1}] = x_{i} x^2_{i+1} x_{i}$,
and for each $1 \le i < j \le n$ with $j-i \ge 2$ we have $\bar{\mathcal S}_n[x_i, x_{j}] = x_i^2 \bar{*} x_j^2 = x_i x_j x_i x_j$.

(iii) for each odd $1 \le i < n$ we have 
$\bar{\mathcal S}_n[x_i, x_{i+1}] = x_{i+1} x^2_{i} x_{i+1}$, for each
even $2 \le i < n$ we have $\bar{\mathcal S}_n[x_i, x_{i+1}] = x_{i} x^2_{i+1} x_{i}$,
for each $1 \le i < j \le n$ with $j-i =2$ or  $j-i =3$ we have $\bar{\mathcal S}_n[x_i, x_{j}] \in \{ x_i x_j x_i x_j, x_j x_i x_j x_i\}$
and for each odd $3 \le j \le n$ we have  $\bar{\mathcal S}_n[x_1, x_{j}]  \in \{ x_1 x_j x_1 x_j, x_j x_1 x_j x_1\}$.

(iv)  If $k$ is odd and $\bar{\mathcal S}_k = ({\bf s}'_k \cdot x_k) ({\bf s}_k  \cdot x_k x_{k-1})$   then    \\ 
$\bar{\mathcal S}_{k+1} = ({\bf s}'_k  \cdot x_{k+1} x_k)   ({\bf s}_k  \cdot x_k x_{k-1} \cdot x_{k+1})$;

If $k$ is even and $\bar{\mathcal S}_k = ({\bf s}_k \cdot  x_k   x_{k-1}) ({\bf s}'_k \cdot x_k)$ then\\ 
$\bar{\mathcal S}_{k+1} =   ({\bf s}_k  \cdot   x_k    x_{k-1}  \cdot  x_{k+1})  ({\bf s}'_k  \cdot x_{k+1} \cdot  x_k)$.

\end{definition}

Notice that Condition (iii) in Proposition~\ref{P: S} and Definition~\ref{D: S1} is the same.
The words $\bar{\mathcal S}_n$ compared to $\mathcal S_n$ 
are enumerated in the opposite direction. For example,
\[\bar{\mathcal S}_8 = ( x_2 x_1 x_4 x_3 x_6 x_5 x_8 x_7) ( x_1 x_3 x_2 x_5 x_4 x_7 x_6 x_8). \]

The following theorem is similar to Theorem~4.13 in \cite{DG}.

\begin{theorem}  \label{abba}
Let $M$ be a monoid that  satisfies $A_{4,\beta}$ for some $\beta \ge 1$ and
for every $r, k \in \mathbb N$ we have
\[ M \models \mathcal S(x_1, \dots, x_r) * \mathcal S(y_1, \dots, y_k)  \approx \mathcal S(y_1, \dots, y_k)  * \mathcal S(x_1, \dots, x_r),\]
where $\mathcal S_n = \mathcal S(x_1, \dots, x_n)$ is as in Definition~\ref{D: words}.

Suppose that $\mathcal S(x,y) = yx^2y$ is an isoterm for $M$ and the words $\{xyxy, yxyx\}$ can form an identity of $M$ only with each other.
Then $M$ is non-finitely related.
\end{theorem}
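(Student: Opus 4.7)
The plan is to apply Theorem~\ref{main} with the sequence $\mathcal X_n := \mathcal S_n$ of Definition~\ref{D: words}, and with $*$ the operation on double-linear words fixed just before Proposition~\ref{P: S}. Condition~(i) of Theorem~\ref{main} follows from the implication (iv)$\Rightarrow$(i) of Proposition~\ref{P: S}, which supplies the required deletion identities as literal equalities of words, hence as identities of $M$. Condition~(ii) is assumed in the hypothesis. For Condition~(iii), $\mathcal S_2 = yx^2y$ gives $\occ(x_1, \mathcal S_2) = \occ(x_2, \mathcal S_2) = 2$, so $d=2$ and $2d=4$, matching $A_{4,\beta}$. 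The real work is in Condition~(iv).

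To verify (iv), I would fix an even $n \ge 6$ and a word ${\bf t}_n$ with $M \models {\bf t}_n[x_i,x_j] \approx \mathcal S_n[x_i,x_j]$ for every $\{i,j\} \ne \{1,n\}$, and first promote these modular identities to literal equalities of two-variable projections. By Observation~\ref{O}, each adjacent $\mathcal S_n[x_i, x_{i+1}]$ is an instance of $yx^2y$, which is an isoterm for $M$, so ${\bf t}_n[x_i, x_{i+1}]$ must literally equal $\mathcal S_n[x_i, x_{i+1}]$; in particular every variable occurs exactly twice in ${\bf t}_n$. For $\{i,j\}$ with $j-i \ge 2$ and $\{i,j\}\ne\{1,n\}$, Observation~\ref{O} gives ${\bf t}_n[x_i,x_j] \approx x_j x_i x_j x_i$, and the hypothesis that $\{xyxy, yxyx\}$ can form an identity of $M$ only with each other forces ${\bf t}_n[x_i,x_j] \in \{x_i x_j x_i x_j, x_j x_i x_j x_i\}$ literally.

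These literal equalities are precisely clause~(iii) of Proposition~\ref{P: S}, noting that for even $n$ the pair $\{1,n\}$ is not required there. A short case analysis using ${\bf t}_n[x_1,x_2]$, ${\bf t}_n[x_2,x_3]$ and ${\bf t}_n[x_1,x_3]$ shows that ${\bf t}_n[x_1,x_2,x_3]$ must equal either $\mathcal S_3 = x_2 x_3 x_1 x_3 x_1 x_2$ or $\bar{\mathcal S}_3 = x_2 x_1 x_3 x_1 x_3 x_2$. Applying (iii)$\Rightarrow$(iv) of Proposition~\ref{P: S} in the first case and the parallel characterization in Definition~\ref{D: S1} in the second, one gets ${\bf t}_n \in \{\mathcal S_n, \bar{\mathcal S}_n\}$. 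In either case ${\bf t}_n[x_1,x_n] \in \{x_1 x_n x_1 x_n, x_n x_1 x_n x_1\}$, whereas $\mathcal Y(x_n, x_1) = x_n x_1^2 x_n$ is an instance of $yx^2y$; the hypothesis on $\{xyxy, yxyx\}$ then denies $M \models {\bf t}_n[x_1,x_n] \approx \mathcal Y(x_n, x_1)$, verifying (iv) and hence the theorem. The principal obstacle I anticipate is the bookkeeping in this last step: verifying clause~(iii) for both $\mathcal S_n$ and its mirror $\bar{\mathcal S}_n$, and confirming that both candidates for ${\bf t}_n$ force an alternating, rather than nested, $\{1,n\}$-projection.
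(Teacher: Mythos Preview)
Your proposal is correct and follows essentially the same approach as the paper's own proof: verify Conditions (i)--(iii) of Theorem~\ref{main} via Proposition~\ref{P: S}(i) and the hypotheses, then establish (iv) by upgrading the modular two-variable identities to literal equalities (using that $yx^2y$ is an isoterm and the closure hypothesis on $\{xyxy,yxyx\}$), recognize these as clause~(iii) of Proposition~\ref{P: S}/Definition~\ref{D: S1}, split on ${\bf t}_n[x_1,x_3]$ to get ${\bf t}_n\in\{\mathcal S_n,\bar{\mathcal S}_n\}$, and conclude. The only cosmetic difference is that in the final step the paper invokes the isoterm property of $yx^2y$ directly to rule out ${\bf t}_n[x_1,x_n]\approx x_nx_1^2x_n$, whereas you invoke the $\{xyxy,yxyx\}$ hypothesis; either works.
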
 

\begin{proof} If $\mathcal X = \mathcal S$ then
Condition~(i)  of Theorem~\ref{main} is satisfied by Proposition~\ref{P: S}(i). Clearly $M$ satisfies  Condition (iii) of Theorem~\ref{main} with $d=2$.

Now fix even $n\ge 6$ and  let ${\bf t}(x_1, \dots, x_n)$ be an $n$-ary term such that  $M \models {\bf t}_n[x_i,x_{j}] \approx \mathcal S_n[x_i,x_{j}]$
for each $\{i,j\} \ne \{1,n\}$.
Then for each odd $1 \le i \le n-1$ we have $M \models {\bf t}_n[x_i, x_{i+1}] \approx  x_{i+1} x^2_i x_{i+1}$,  for each even $2 \le i \le n-2$ we have $M \models {\bf t}_n[x_i, x_{i+1}] \approx  x_i x^2_{i+1} x_{i}$.
Since $n>4$,  $M \models {\bf t}_n[x_i, x_{i+2}] \approx x_j x_i x_jx_i$ for each $1 \le i < j \le n$ with $j-i =2$ or  $j-i =3$. 
Also for each $j= 3, 5, \dots, n-1$ we have  $M \models {\bf t}_n[x_1, x_j] \approx x_j x_1 x_jx_1$.

 Since $xy^2x$ is an isoterm for $M$ and the words $\{xyxy, yxyx\}$ can form an identity of $M$ only with each other, 
 for each odd $1 \le i \le n-1$ we have ${\bf t}_n[x_i, x_{i+1}] =  x_{i+1} x^2_i x_{i+1}$, for each even $2 \le i \le n-2$ we have ${\bf t}_n[x_i, x_{i+1}] = x_{i+1} x^2_{i} x_{i+1}$, and for each $1 \le i < j \le n$ with $j-i =2$ or  $j-i =3$ 
 we have ${\bf t}_n[x_i, x_{j}] \in  \{x_i x_j x_ix_j, x_jx_i x_j x_i\}$.
Also for each $j= 3, 5, \dots, n-1$ we have  ${\bf t}_n[x_1, x_{j}]  \in \{ x_1 x_j x_1 x_j, x_j x_1 x_j x_1\}$.

If ${\bf t}_n[x_1, x_{3}] = x_3 x_1 x_3 x_1$, then 
${\bf t}_n[x_1, x_2, x_{3}] = \mathcal S_3= (x_2 x_3 x_1)(x_3 x_1 x_2)$.
Proposition~\ref{P: S}[(iii)$\rightarrow$(ii)] implies that  ${\bf t}_n = \mathcal S_n$ and ${\bf t}_n[x_1,x_n]= {\mathcal S}_n[x_1,x_n]=x_n x_1 x_nx_1$.

 If ${\bf t}_n[x_1, x_{3}] = x_1 x_3 x_1 x_3$, then 
${\bf t}_n[x_1, x_2, x_{3}] = \bar{\mathcal S}_3 =  (x_2 x_1 x_3) ( x_1 x_3 x_2)$.
Definition~\ref{D: S1}[(iii)$\rightarrow$(ii)]  implies that  ${\bf t}_n = \bar{\mathcal S}_n$ and ${\bf t}_n[x_1,x_n]= \bar{{\mathcal S}}_n[x_1,x_n]=x_1 x_n x_1x_n$.

Since $xy^2x$ is an isoterm for $M$, the monoid $M$ does not satisfy ${\bf t}_n[x_1,x_n] \approx \mathcal Y[x_n, x_1] = x_n x^2_1 x_n$.
Therefore, $M$ is non-finitely related by Theorem~\ref{main}.
\end{proof}

Theorem~\ref{abba} together with Lemma~\ref{L: iso} readily gives us the following.

\begin{ex} \label{E: aba}
The monoid $M(a^2b^2, ab^2a)$ is non-finitely related.

\end{ex}

%%%%%%%%%%%%%%%%%%%%%%%%%%%%%%%%%%%%%%%%%%%%%

\section{More definitions of chain and maelstrom words}

Notice that Proposition~\ref{P: S} and Definition~\ref{D: S1} play the key role in proving Theorem~\ref{abba}. In this section we give similar 
definitions to the chain and maelstrom words. These definitions can be used in a similar way to reprove Theorems~4.4 and 4.13 in \cite{DG}.

Let $\odot$ denote the `wrapping' operation in \cite[Definition~4.10]{DG}, but in the reverse order, that is, ${\bf w} \odot {\bf v}$ means that
$\bf v$ wraps around $\bf w$. 
Here are three more definitions of maelstrom words.

\begin{prop} \label{P: M} Let $n \ge 3$ and $\mathcal M_n = \mathcal M(x_1, \dots, x_n)$  be an $n$-ary word such that $\mathcal M_n [x_1, x_2, x_3]= (x_2 x_3 x_1)(x_2 x_1 x_3)$.
Then the following are equivalent:

(i)  
\[\mathcal M^{\not{x_m}} (x_1, \dots,  x_n)  =  \mathcal M(x_1, \dots, x_{m-1})  \odot \mathcal Y (x_{m+1}, \dots, x_n)\] if $1 \le m \le n$ is odd, and
 \[\mathcal M^{\not{x_m}} (x_1, \dots,  x_n)  =  \mathcal M(x_1, \dots, x_{m-1})  \odot \mathcal M (x_{m+1}, \dots, x_n)\] if $1 \le m \le n$ is even.

(ii) for each odd $1 \le i < n$ we have $\mathcal M_n[x_i, x_{i+1}] = x_{i+1} x_{i} x_{i+1} x_{i}$, for each
even $2 \le i < n$ we have $\mathcal M_n[x_i, x_{i+1}] = x_{i} x_{i+1} x_{i} x_{i+1}$,
and for each $1 \le i < j \le n$ with $j-i \ge 2$ we have $\mathcal M_n[x_i, x_{j}] = x_i^2 \odot x_{j}^2 = x_j x_i^2 x_j$.

(iii) for each odd $1 \le i < n$ we have $\mathcal M_n[x_i, x_{i+1}] = x_{i+1} x_{i} x_{i+1} x_{i}$, for each
even $2 \le i < n$ we have $\mathcal M_n[x_i, x_{i+1}] = x_{i} x_{i+1} x_{i} x_{i+1}$,
and for each $1 \le i < j \le n$ with $j-i =2$ or  $j-i =3$ we have $\mathcal M_n[x_i, x_{j}] \in  \{x_j x_i^2 x_j, x_i x_j^2 x_i\}$.

(iv) $\mathcal M_n$ is  as in Definition~\ref{D: words}.
\end{prop}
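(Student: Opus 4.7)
The plan is to follow the pattern of the proof of Proposition~\ref{P: S}, by induction on $n$. The base case $n = 3$ is exactly the hypothesis: the word $(x_2 x_3 x_1)(x_2 x_1 x_3)$ is visibly the word produced by Definition~\ref{D: words} at stage $3$. For the inductive step I would verify the cycle (i)$\Rightarrow$(ii)$\Rightarrow$(iii)$\Rightarrow$(iv)$\Rightarrow$(i), assuming the statement at $k$ and proving it at $k+1$.

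Three of the four implications are light. For (i)$\Rightarrow$(ii), apply Observation~\ref{O} with $M$ taken to be the free monoid and the operation $*$ replaced by $\odot$; the key input is the identity $x^2 \odot y^2 = y x^2 y$, which supplies the projection for pairs with $j - i \ge 2$. The implication (ii)$\Rightarrow$(iii) is immediate because $y x^2 y$ belongs to the set $\{x_j x_i^2 x_j, x_i x_j^2 x_i\}$ appearing in (iii). Finally (iv)$\Rightarrow$(i) is a routine calculation from the recursive formula: one extracts $\mathcal{M}^{\not{x_m}}_n$ and matches it against $\mathcal{M}(x_1,\ldots,x_{m-1}) \odot \mathcal{Y}(x_{m+1},\ldots,x_n)$ for odd $m$, and against $\mathcal{M}(x_1,\ldots,x_{m-1}) \odot \mathcal{M}(x_{m+1},\ldots,x_n)$ for even $m$.

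The substantive step, and the main obstacle, is (iii)$\Rightarrow$(iv). By induction $\mathcal{M}_k$ is as in Definition~\ref{D: words}, so $\mathcal{M}_{k+1}$ is obtained from $\mathcal{M}_k$ by inserting ${_1x}_{k+1}$ and ${_2x}_{k+1}$ at two positions. I would use the three two-variable projections $\mathcal{M}_{k+1}[x_i, x_{k+1}]$ for $i \in \{k-2, k-1, k\}$ to pin those two positions down uniquely. For $k$ odd, the recursive shape of $\mathcal{M}_k$ shows that the letters $x_{k-2}, x_{k-1}, x_k$ sit at specific positions near both ends; combining the three projections and testing each candidate insertion slot leaves only the pattern $\mathcal{M}_{k+1} = x_{k+1}\, {\bf m}_k\, x_{k+1}\, x_k$ of Definition~\ref{D: words}. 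The case $k$ even is dual and yields $\mathcal{M}_{k+1} = x_k\, x_{k+1}\, {\bf m}'_k\, x_{k+1}$. The care required in this enumeration is the real content of the proof; it parallels the corresponding, slightly longer argument given for $\mathcal{S}_n$ in Proposition~\ref{P: S}, but is cleaner here because the projection $y x^2 y$ is palindromic, which is why no separate clause on the pairs $\{x_1, x_j\}$ for odd $j$ is needed in (iii).
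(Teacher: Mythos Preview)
Your proposal is correct and matches the paper's proof essentially line for line: the paper also dispatches (i)$\Rightarrow$(ii), (ii)$\Rightarrow$(iii), and (iv)$\Rightarrow$(i) by pointing to Proposition~\ref{P: S}, and for (iii)$\Rightarrow$(iv) it uses exactly the three projections $\mathcal{M}_{k+1}[x_i,x_{k+1}]$ with $i\in\{k-2,k-1,k\}$, treating $k$ odd and $k$ even separately, to force the two insertion positions of $x_{k+1}$. Your observation about the palindromic projection $x_jx_i^2x_j$ removing the need for the extra $\{x_1,x_j\}$ clause is also in agreement with how the paper states (iii).
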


\begin{proof} As in Proposition~\ref{P: S} only one implication needs verification.

(iii)$\rightarrow$(iv) By induction hypothesis, $\mathcal M_k$ is a maelstrom word as in Definition~\ref{D: words}. If $k$ is odd then
 we need to insert $x_{k+1}$ into $\mathcal M_k$ so that
$\mathcal M_{k+1}[x_k, x_{k+1}] = x_{k+1} x_{k} x_{k+1} x_{k}$.
Then either ${_1x}_{k+1}$ is the first letter in $\mathcal M_{k+1}$ or
we have \[({_1x}_{k-1}) \ll_{\bf u} ({_1x}_{k+1}) \ll_{\bf u} ({_1x}_{k}),\]
where ${\bf u} =\mathcal M_{k+1}$. In both cases, we must have
$\mathcal M_{k+1}[x_{k-2}, x_{k+1}] = x_{k+1} x_{k-2}^2 x_{k+1}$.
This leaves only one possibility for ${_2x}_{k+1}$:
\[({_2x}_{k-2}) \ll_{\bf u} ({_2x}_{k+1}) \ll_{\bf u} ({_2x}_{k}).\]
Hence we must have $\mathcal M_{k+1}[x_{k-1}, x_{k+1}] = x_{k+1} x_{k-1}^2 x_{k+1}$.
This implies that   ${_1x}_{k+1}$ is the first letter in $\mathcal M_{k+1}$.
The resulting word $\mathcal M_{k+1}$ is maelstrom by  Definition~\ref{D: words}.

If $k$ is even then we need to insert $x_{k+1}$ into $\mathcal M_k$ so that
$\mathcal M_{k+1}[x_k, x_{k+1}] = x_{k} x_{k+1} x_{k} x_{k+1}$.
Then either ${_2x}_{k+1}$ is the last letter in $\mathcal M_{k+1}$ or
we have \[({_2x}_{k}) \ll_{\bf u} ({_2x}_{k+1}) \ll_{\bf u} ({_2x}_{k-1}),\]
where ${\bf u} =\mathcal M_{k+1}$. In both cases, we must have
$\mathcal M_{k+1}[x_{k-2}, x_{k+1}] = x_{k+1} x_{k-2}^2 x_{k+1}$.
This leaves only one possibility for ${_1x}_{k+1}$:
\[({_1x}_{k}) \ll_{\bf u} ({_1x}_{k+1}) \ll_{\bf u} ({_1x}_{k-2}).\]
Hence we must have $\mathcal M_{k+1}[x_{k-1}, x_{k+1}] = x_{k+1} x_{k-1}^2 x_{k+1}$.
This implies that   ${_2x}_{k+1}$ is the last letter in $\mathcal M_{k+1}$.
The resulting word $\mathcal M_{k+1}$ is maelstrom by  Definition~\ref{D: words}.
\end{proof}

Let $\bar{\odot}$ denote the operation on the double-linear words which is dual to the operation $\odot$
(this will be the operation in \cite[Definition~4.10]{DG}).
Here are  four definitions of words $\bar{\mathcal M}_n$.

\begin{definition} \label{D: M1} Let $n \ge 3$ and $\bar{\mathcal M}_n = \bar{\mathcal M}(x_1, \dots, x_n)$  be an $n$-ary word such that $\bar{\mathcal M}_n [x_1, x_2, x_3]= (x_2 x_1 x_3)(x_2 x_3 x_1)$.
Then the following are equivalent:

(i)  
\[\bar{\mathcal M}^{\not{x_m}} (x_1, \dots,  x_n)  =  \bar{\mathcal M}(x_1, \dots, x_{m-1})  \bar{\odot} \mathcal Y (x_{m+1}, \dots, x_n)\] if $1 \le m \le n$ is odd, and
 \[\bar{\mathcal M}^{\not{x_m}} (x_1, \dots,  x_n)  =  \bar{\mathcal M}(x_1, \dots, x_{m-1})  \bar{\odot} \bar{\mathcal M} (x_{m+1}, \dots, x_n)\] if $1 \le m \le n$ is even.

(ii) for each odd $1 \le i < n$ we have $\bar{\mathcal M}_n[x_i, x_{i+1}] = x_{i+1} x_{i} x_{i+1} x_{i}$, for each
even $2 \le i < n$ we have $\bar{\mathcal M}_n[x_i, x_{i+1}] = x_{i} x_{i+1} x_{i} x_{i+1}$,
and for each $1 \le i < j \le n$ with $j-i \ge 2$ we have $\bar{\mathcal M}_n[x_i, x_{j}] = x_i^2 \bar{\odot} x_{j}^2 = x_ix_j^2 x_i$.

(iii) for each odd $1 \le i < n$ we have $\bar{\mathcal M}_n[x_i, x_{i+1}] = x_{i+1} x_{i} x_{i+1} x_{i}$, for each
even $2 \le i < n$ we have $\bar{\mathcal M}_n[x_i, x_{i+1}] = x_{i} x_{i+1} x_{i} x_{i+1}$,
and for each $1 \le i < j \le n$ with $j-i =2$ or  $j-i =3$ we have $\bar{\mathcal M}_n[x_i, x_{j}] \in  \{x_j x_i^2 x_j, x_i x_j^2 x_i\}$.

(iv)  If $k$ is odd and $\bar{\mathcal M}_k = {\bf m}_k \cdot x_k x_{k-1} x_k \cdot {\bf m}'_k$ then \\ $\bar{\mathcal M}_{k+1} = {\bf m}_k \cdot x_{k+1}\cdot x_k \cdot x_{k+1} \cdot x_{k-1}  x_k \cdot {\bf m}'_k$;

If $k$ is even and $\bar{\mathcal M}_k = {\bf m}_k \cdot x_k x_{k-1} x_k \cdot {\bf m}'_k$ then \\ $\bar{\mathcal M}_{k+1} = {\bf m}_k \cdot  x_k  x_{k-1} \cdot  x_{k+1} \cdot x_k \cdot x_{k+1} \cdot {\bf m}'_k$.

\end{definition}

Again, as for $\mathcal S_n$ and $\bar{\mathcal S}_n$ (Proposition~\ref{P: S} and Definition~\ref{D: S1}),  Condition (iii) in Proposition~\ref{P: M} and Definition~\ref{D: M1} is the same.
The words $\bar{\mathcal M}_n$ can be obtained from $\mathcal M_n$ by switching the order of linear parts. 
For example,
\[\mathcal M_8 = ( x_8 x_6 x_7 x_4 x_5 x_2 x_3 x_1)( x_2 x_1 x_4 x_3 x_6 x_5 x_8 x_7)  \]
\[\bar{\mathcal M}_8 = ( x_2 x_1 x_4 x_3 x_6 x_5  x_8 x_7 )(x_8  x_6  x_7x_4  x_5 x_2 x_3 x_1 )  \]

Finally, three more definitions of the chain words.

\begin{prop} \label{P: C} 
Let $n \ge 3$ and $\mathcal C_n = \mathcal C(x_1, \dots, x_n)$  be an $n$-ary word such that $\mathcal C_n [x_1, x_2]= x_1 x_2 x_1 x_2$.
Then the following are equivalent:

(i) \[\mathcal C^{\not{x_m}} (x_1, \dots,  x_n)  =  \mathcal C(x_1, \dots, x_{m-1})  \mathcal  C(x_{m+1}, \dots, x_n)\] for any $1 \le m \le n$; 

(ii) for each $1 \le i \le n-1$ we have $\mathcal C_n[x_i, x_{i+1}] = x_i x_{i+1} x_i x_{i+1}$ and for each
$1 \le i < j \le n$ with $j-i \ge 2$ we have $\mathcal C_n[x_i, x_{j}] = x_i^2 x_{j}^2$.

(iii) for each $1 \le i \le n-1$ we have $\mathcal C_n[x_i, x_{i+1}] = x_i x_{i+1} x_i x_{i+1}$ and for each
$1 \le i < j \le n$ with $j-i = 2$ we have $\mathcal C_n[x_i, x_{j}] \ne  x_i x_j x_i x_j$.

(iv) $\mathcal C_n$ is  as in Definition~\ref{D: words}.

\end{prop}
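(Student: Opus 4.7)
The plan is to mirror the proof scheme of Proposition~\ref{P: S}: induct on $n$ with base $n=3$ (where $\mathcal{C}_3 = x_1 x_2 x_1 x_3 x_2 x_3$ satisfies each of the four conditions by direct inspection), and at the inductive step run the cycle (i)$\to$(ii)$\to$(iii)$\to$(iv)$\to$(i). The induction hypothesis gives us that $\mathcal{C}_k$ coincides with the word of Definition~\ref{D: words} for each $3 \le k < n$, and in particular satisfies condition (i) at the lower arity.

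For (i)$\to$(ii) I would iteratively peel off variables. One application of (i) factors $\mathcal{C}_n^{\not{x_m}}$ as $\mathcal{C}(x_1,\dots,x_{m-1}) \cdot \mathcal{C}(x_{m+1},\dots,x_n)$; each factor being a smaller chain word, the same deletion rule applies inside each factor by the inductive hypothesis, so the process iterates. Erasing every variable except the pair $x_i, x_j$ collapses the result to $\mathcal{C}(x_i) \cdot \mathcal{C}(x_j) = x_i^2 x_j^2$ when $j - i \ge 2$, and to $\mathcal{C}(x_i, x_{i+1}) = x_i x_{i+1} x_i x_{i+1}$ in the adjacent case. Implication (ii)$\to$(iii) is immediate, since (iii) is a weakening of (ii) and $x_i^2 x_j^2 \ne x_i x_j x_i x_j$.

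The substantive step is (iii)$\to$(iv). A short side induction on $k$ using the recursion in Definition~\ref{D: words} shows that $\mathcal{C}_{n-1}$ ends with the block ${_2x}_{n-2} \cdot {_2x}_{n-1}$. We must place the two occurrences of $x_n$ in $\mathcal{C}_{n-1}$ to build $\mathcal{C}_n$. The constraint $\mathcal{C}_n[x_{n-1}, x_n] = x_{n-1} x_n x_{n-1} x_n$ forces $({_1x}_{n-1}) <_{\bf u} ({_1x}_n) <_{\bf u} ({_2x}_{n-1}) <_{\bf u} ({_2x}_n)$; since ${_2x}_{n-1}$ is the final letter of $\mathcal{C}_{n-1}$, this pins ${_2x}_n$ at the tail. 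The only remaining freedom is whether ${_1x}_n$ precedes or follows ${_2x}_{n-2}$. The first option yields $\mathcal{C}_n[x_{n-2}, x_n] = x_{n-2} x_n x_{n-2} x_n$, which is forbidden by (iii); the second option produces precisely the word $\mathcal{C}_n$ of Definition~\ref{D: words}.

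Finally, (iv)$\to$(i) unfolds the recursion directly. Writing $\mathcal{C}_n = {\bf c}_{n-1} \cdot x_n \cdot x_{n-1} \cdot x_n$ with $\mathcal{C}_{n-1} = {\bf c}_{n-1} \cdot x_{n-1}$, deletion of $x_n$ returns $\mathcal{C}_{n-1}$; deletion of $x_{n-1}$ returns $\mathcal{C}(x_1,\dots,x_{n-2}) \cdot x_n^2$ after invoking the inductive form of (i); and deletion of $x_m$ for $m < n-1$ reduces, again using the inductive form of (i) on $\mathcal{C}_{n-1}$, to $\mathcal{C}(x_1,\dots,x_{m-1}) \cdot \mathcal{C}(x_{m+1},\dots,x_n)$ once we absorb the trailing $x_n \cdot x_{n-1} \cdot x_n$ into the right-hand chain via Definition~\ref{D: words}. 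The main obstacle is (iii)$\to$(iv), which requires both a precise description of the tail of $\mathcal{C}_{n-1}$ and the combined use of the adjacency and $j-i=2$ constraints to lock the new letter into place; the remaining steps are routine cascades.
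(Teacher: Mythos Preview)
Your proof is correct and follows essentially the same approach as the paper's. The paper only spells out (iii)$\to$(iv), declaring that the remaining implications need no separate verification (the pattern having been established in the analogous Proposition for $\mathcal S_n$); your argument for (iii)$\to$(iv) --- identifying the tail ${_2x}_{n-2}\cdot{_2x}_{n-1}$ of $\mathcal C_{n-1}$ and using the adjacency constraint together with the $j-i=2$ constraint to pin the new letter into its unique slot --- is exactly the paper's argument with the details filled in.
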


\begin{proof}
Again, only one implication needs verification.

(iii)$\rightarrow$(iv) By induction hypothesis, $\mathcal C_k$ is a chain word as in Definition~\ref{D: words}.
Then there is only one possibility to insert $x_{k+1}$
in $\mathcal C_k$ so that $\mathcal C_{k+1}[x_k, x_{k+1}] = x_k x_{k+1} x_k x_{k+1}$ and $\mathcal C_{k+1}[x_{k-1}, x_{k+1}] \ne  x_{k-1} x_{k+1} x_{k-1} x_{k+1}$. 
The resulting word $\mathcal C_{k+1}$ is chain by  Definition~\ref{D: words}.
\end{proof}

%%%%%%%%%%%%%%%%%%%%%%%%%%%%%%%%%%%%%%%%%%%%%%%%%%%%%%%%%%%%%%%%%%%%%%%%%

\section{Sufficient condition under which a monoid is finitely related} \label{sec: FR}

Let  $X_n=\{x_1, \dots, x_n\}$ and $\mathcal F (X_n) = \{{\bf t}_{ij} \mid 1 \le i < j \le n \}$ be an $n$-ary scheme for $M$.
 For each $1 \le s \le n$ define
  \[\occ(x_s, \mathcal F) = \min \{\occ(x_s, {\bf t}_{ij}) \mid 1 \le i < j \le  n, s \not \in \{i, j \}\}.\]
In  \cite{DG, Steindl}, the value $\occ(x_s, \mathcal F)$ is referred to as the exponent of $x_s$ in scheme $\mathcal F$.
It is convenient  to say that  $\occ(x_s, \mathcal F)$ denotes the number of times $x_s$ occurs in $\mathcal F$.

\begin{lemma} \label{L: x^m}  Let $M$ be a finite monoid such that  for some $m > 0$, $M \models A_{m+1,1}$
and $x^m$ is an isoterm for $M$.  Let $n > \max(6, |M|+1)$ and $\mathcal F (X_n) = \{{\bf t}_{ij} \mid 1 \le i < j \le n \}$ be an $n$-ary scheme for $M$.
Then for each $1 \le s \le n$

(i) we may assume that  $1 \le \occ(x_s, \mathcal F) \le m$;

(ii) for every $1 \le i < j \le n$  with  $s \not \in \{i, j \}$  we have $\occ(x_s, {\bf t}_{ij}) = \occ(x_s, \mathcal F)$;
 
 (iii)   if $occ(x_r, \mathcal F) + occ(x_s, \mathcal F) \le m$  then \[occ(x_s, {\bf t}_{rs}) = occ(x_r, \mathcal F) + occ(x_s, \mathcal F)\]
 and $occ(x_s, {\bf t}_{rs}) >m$ otherwise.
\end{lemma}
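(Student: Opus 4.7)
The core of the argument rests on two properties of $M$. First, since $x^m$ is an isoterm for $M$ and $M \models x^{m+1} \approx x^{m+2}$, any identity $M \models \mathbf u \approx \mathbf v$ respects the occurrence count of every variable up to saturation at $m+1$: substituting $1$ for all other letters gives $M \models x^{\occ(x, \mathbf u)} \approx x^{\occ(x, \mathbf v)}$, and the isoterm hypothesis forces equality whenever either side is at most $m$. Second, the identity $t_1 x \dots t_{m+1} x \approx x^{m+1} t_1 \dots t_{m+1}$ of $A_{m+1,1}$ lets us collect any variable occurring at least $m+1$ times into a block at the front, after which $x^{m+1} \approx x^{m+2}$ trims the block to exactly $m+1$ copies. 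Applying this normalization term by term to $\mathcal F$ yields an $M$-equivalent family, hence still a scheme, in which no variable occurs more than $m+1$ times in any $\mathbf t_{ij}$.

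For (ii), fix $s$ and pairs $(i,j)$, $(k,l)$ with $s \notin \{i,j,k,l\}$. Since $n \ge 7$, a (possibly trivial) chain of Case~1 consistency identities, routing through an intermediary pair disjoint from $\{i,j,k,l,s\}$ when needed, yields $M \models \mathbf t_{ij}^{(kl)} \approx \mathbf t_{kl}^{(ij)}$ and its counterparts. Because $s$ is untouched by each renaming, the occurrence invariant (together with normalization, which keeps both sides in $[0, m+1]$) forces $\occ(x_s, \mathbf t_{ij}) = \occ(x_s, \mathbf t_{kl})$, so this quantity is constant across eligible pairs and equal to $\occ(x_s, \mathcal F)$.

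For (iii), assume $r < s$ and (since $n \ge 4$) pick $i < j$ with $\{i, j\}$ disjoint from $\{r, s\}$. Case~1 of consistency gives $M \models \mathbf t_{ij}^{(rs)} \approx \mathbf t_{rs}^{(ij)}$. The left side contributes $\occ(x_s, \mathbf t_{ij}) + \occ(x_r, \mathbf t_{ij}) = \occ(x_s, \mathcal F) + \occ(x_r, \mathcal F)$ in occurrences of $x_s$ (using (ii)), whereas the right side contributes $\occ(x_s, \mathbf t_{rs})$ (the renaming $x_i \to x_j$ does not touch $x_s$). The occurrence invariant now gives both assertions of (iii) simultaneously: if the sum is at most $m$ equality holds exactly, otherwise both are at least $m+1$ and in particular $\occ(x_s, \mathbf t_{rs}) > m$.

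For (i), the upper bound $\le m$ requires a further reduction: if (ii) produces $\occ(x_s, \mathcal F) = m+1$, then after normalization every $\mathbf t_{ij}$ with $s \notin \{i,j\}$ carries $x_s^{m+1}$ as a prefix, and this common prefix can be stripped uniformly across the family (adjusting $\mathbf t_{is}$ and $\mathbf t_{sj}$ via the dependency and consistency conditions so the result remains a scheme) to yield an equivalent scheme with $\occ(x_s, \mathcal F) \le m$. The lower bound $\occ(x_s, \mathcal F) \ge 1$ is imposed without loss of generality: if $x_s$ is absent from every $\mathbf t_{ij}$ with $s \notin \{i,j\}$, the scheme effectively lives on $X_n \setminus \{x_s\}$ and we may pass to the $(n-1)$-variable subscheme, iterating only finitely many times. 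The main obstacle is verifying that the prefix-stripping step of (i) preserves all three cases of the consistency condition; this is done by careful bookkeeping, leveraging the uniformity already established in (ii).
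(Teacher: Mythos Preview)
The paper's own proof is entirely by citation: parts (i), (ii), (iii) are deferred to Lemmas~2.3, 2.4(i)--(ii), and~2.6 of \cite{DG}, respectively. Your proposal, by contrast, attempts to supply the underlying arguments directly.

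Your treatments of (ii) and (iii) are correct and are essentially what those cited lemmas contain: the occurrence count of a variable is an $M$-invariant up to saturation at $m+1$ (because $x^m$ is an isoterm while $x^{m+1}\approx x^{m+2}$), and the consistency identities transport this invariant between members of the scheme. Routing through an auxiliary pair when $\{i,j\}$ and $\{k,l\}$ overlap is the standard device, and $n>6$ gives enough room.

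Your treatment of (i), however, is only a sketch with real gaps. First, ``stripping the prefix $x_s^{m+1}$'' removes $x_s$ entirely from each $\mathbf t_{ij}$ with $s\notin\{i,j\}$; it does not land you at $\occ(x_s,\mathcal F)\le m$ but at $\occ(x_s,\mathcal F)=0$, so the upper and lower bound reductions are entangled rather than independent. Second, ``passing to the $(n-1)$-variable subscheme'' does not by itself show that the original $n$-ary scheme comes from a term: you must also verify that if the subscheme comes from some $(n-1)$-ary $\mathbf t'$, then the $n$-ary term $x_s^{m+1}\mathbf t'$ (or an appropriate variant) recovers the discarded members $\mathbf t_{rs}$ and $\mathbf t_{sj}$ up to $M$-equivalence. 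You flag the consistency bookkeeping as ``the main obstacle'' but do not carry it out. These steps are exactly the content of Lemmas~2.3 and~2.6 in \cite{DG}, and they require nontrivial case analysis; the proposal as written does not establish (i).
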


\begin{proof} (i) In view of Lemma~2.3 in \cite{DG} we may assume
that $\con({\bf t}_{ij}) = X_n \setminus\{x_i\}$ for each $1 \le i < j \le  n$.
Hence $1 \le \occ(x_s, \mathcal F)$. In view of Lemma~2.6 in \cite{DG}
we may assume that $\occ(x_s, \mathcal F) \le m$.

Part (ii)  follows from Lemma~2.4(i) in \cite{DG} and the fact that $x^m$ is an isoterm for $M$. 
Part (iii) is by Lemma~2.4(ii) in \cite{DG}.
\end{proof}

For every $n$-ary scheme $\mathcal F (X_n)$  
define \[\os(\mathcal F) = \{ {_ix}_s \mid 1 \le s \le n, 1 \le i \le \occ(x_s, \mathcal F) \}.\] 
For every $1 \le p \le c$ and $1 \le q \le d$ define
\[C_{(p \le c, q \le d)} =  \{ \{{_px}, {_qy}\} \subset  \os^2(\mathcal F)\mid \occ(x, \mathcal F) = c, \occ(y, \mathcal F)=d \}.\]
For  $\{{_px}, {_qy}\} \in C_{(p \le c, q \le d)}$  define

$\bullet$  $({_px}) <_{\mathcal F} ({_qy})$ if
 for each $1 \le k < l \le n$   with $\{x,y\} \cap \{x_k, x_l \} = \emptyset$ we have $({_{p}x}) <_{({\bf t}_{kl})} ({_{q}y})$.

Given $1 \le p \le c$ and $1 \le q \le d$ we say that 
an  identity  ${\bf u} \approx {\bf v}$  satisfies property $P_{(p \le c, q \le d)}$ 
if for each $x, y \in \con({\bf u})$ with $\occ(x, {\bf u}) = \occ(x, {\bf v}) = c$ and
 $\occ(y, {\bf u}) = \occ(y, {\bf v}) = d$ we have
$({_{p}x}) <_{\bf u} ({_{q}y})$ iff $({_{p}x}) <_{\bf v} ({_qy})$.

\begin{lemma} \label{L: order}  Let $M$ be a finite monoid such that  for some $m > 0$, $M \models A_{m+1,1}$
and $x^m$ is an isoterm for $M$.  Let $n > \max(6, |M|+1)$ and $\mathcal F (X_n) = \{{\bf t}_{ij} \mid 1 \le i < j \le n \}$ be an $n$-ary scheme for $M$
as in Lemma~\ref{L: x^m}. Suppose that for some  $1 \le p \le c \le m$ and $1 \le q \le d \le m$
every identity of $M$ satisfies  Property $P_{(p \le c, q \le d)}$.
Then 

 (i)  for every $\{{_px}, {_qy}\} \in C_{(p \le c, q \le d)}$ either
 $({_px}) <_{\mathcal F} ({_qy})$ or $({_qy}) <_{\mathcal F} ({_px})$;

(ii) the relation  $<_{\mathcal F}$ is a partial order on $\os(\mathcal F)$ with domain $C_{(p \le c, q \le d)}$.

\end{lemma}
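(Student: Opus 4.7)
The crux is to use scheme consistency (Case~1) together with Property $P_{(p\le c,q\le d)}$ to propagate the relative order of ${_px}$ and ${_qy}$ across different terms $\mathbf{t}_{kl}$ of $\mathcal F$.

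For part~(i), fix $\{{_px},{_qy}\}\in C_{(p\le c,q\le d)}$ and write $x=x_s$, $y=x_t$. The goal is to show that for any two index pairs $(k,l)$ and $(k',l')$ with $\{s,t\}\cap(\{k,l\}\cup\{k',l'\})=\emptyset$, the relative order of ${_px}$ and ${_qy}$ in $\mathbf{t}_{kl}$ agrees with that in $\mathbf{t}_{k'l'}$. When $\{k,l\}\cap\{k',l'\}=\emptyset$, consistency Case~1 gives $M\models \mathbf{t}_{kl}^{(k'l')}\approx \mathbf{t}_{k'l'}^{(kl)}$; neither renaming touches $x$ or $y$, so by Lemma~\ref{L: x^m}(ii) both sides still satisfy $\occ(x)=c$ and $\occ(y)=d$, while the positions of ${_px}$ and ${_qy}$ are unchanged. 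Property $P_{(p\le c,q\le d)}$ then forces agreement of their relative order in $\mathbf{t}_{kl}$ and $\mathbf{t}_{k'l'}$. If instead $\{k,l\}\cap\{k',l'\}\ne\emptyset$, the forbidden set $\{k,l,k',l',s,t\}$ has at most five elements and, since $n\ge 7$, one can pick a pair $(k'',l'')$ outside it; applying the previous step twice bridges $\mathbf{t}_{kl}$ and $\mathbf{t}_{k'l'}$ through $\mathbf{t}_{k''l''}$. Because $<_{\mathbf{t}_{kl}}$ is a strict total order on $\os(\mathbf{t}_{kl})$, exactly one of ${_px}<_{\mathbf{t}_{kl}}{_qy}$ and ${_qy}<_{\mathbf{t}_{kl}}{_px}$ holds, and by the above this choice does not depend on the valid pair $(k,l)$.

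For part~(ii), irreflexivity and asymmetry of $<_{\mathcal F}$ are inherited directly from the corresponding properties of each total order $<_{\mathbf{t}_{kl}}$. For transitivity, given a chain $a<_{\mathcal F}b<_{\mathcal F}c$ with both adjacent pairs in $C_{(p\le c,q\le d)}$, I pick any index pair $(k,l)$ whose indices avoid those of the three variables appearing in $a,b,c$ (again possible by the hypothesis on $n$); inside the single word $\mathbf{t}_{kl}$ the total order yields $a<_{\mathbf{t}_{kl}}c$, and part~(i) promotes this to $a<_{\mathcal F}c$.

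The main obstacle is the renaming bookkeeping in the Case~1 identity: one must check that the substitutions $(k'l')$ and $(kl)$ leave the specific occurrence labels ${_px}$ and ${_qy}$ untouched so that Property~$P$ applies as stated. This is guaranteed because the valid-pair condition places the indices $s,t$ of $x,y$ outside the substituted indices, and because Lemma~\ref{L: x^m}(ii) pins the occurrence counts at $c$ and $d$. A secondary technical point is ensuring that an intermediate pair $(k'',l'')$ can always be found when needed, which the hypothesis $n>\max(6,|M|+1)$ supplies once one notes that intermediaries are only required when the two original pairs already overlap.
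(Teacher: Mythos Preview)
Your proof is correct and follows essentially the same approach as the paper: use Case~1 consistency together with Property~$P_{(p\le c,q\le d)}$ to transfer the relative order of ${_px}$ and ${_qy}$ between any two admissible terms $\mathbf{t}_{kl}$, and then verify antisymmetry and transitivity by restricting to a single term $\mathbf{t}_{kl}$ whose indices avoid the (at most three) variables involved. Your explicit bridging through an intermediate pair $(k'',l'')$ when $\{k,l\}$ and $\{k',l'\}$ overlap makes rigorous a step that the paper's proof leaves implicit.
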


\begin{proof} (i)  Since $\occ(x, \mathcal F) = c$ and $\occ(y, \mathcal F)=d$, Lemma~\ref{L: x^m} implies that
for each $1 \le k < l \le  n$ with $\{x, y\} \cap \{x_k, x_l \} =\emptyset$ we have $\occ(x, {\bf t}_{kl}) =c$ and $\occ(y, {\bf t}_{kl}) =d$.

Suppose that for some $1 \le i < j \le  n$ with $\{x, y\} \cap \{x_i, x_j \} =\emptyset$ we have  $({_{p}x}) <_{({\bf t}_{ij})} ({_{q}y})$.
Since $M \models  {\bf t}_{ij}^{(kl)} \approx  {\bf t}^{(ij)}_{kl}$  and  every identity of $M$ satisfies  Property $P_{(p \le c, q\le d)}$, we have $({_{p}x}) <_{({\bf t}_{kl})} ({_{q}y})$.
This means that $({_qx}) <_{\mathcal F} ({_py})$.

(ii) If we restrict the relation $\langle C_{(p \le c, q \le d)}, <_{\mathcal F}\rangle$ to a connected subset of $\os(\mathcal F)$ with at most three elements, then for some $1 \le k < l \le  n$, it coincides with the total order $<_{({\bf t}_{kl})}$ on this subset. Hence it is anti-symmetric and transitive.
\end{proof}

\begin{obs}  \label{O: P} Suppose that for some  $1 \le p \le c $ and $1 \le q \le d$
every identity of a semigroup $S$ satisfies  Property $P_{(p \le c, q \le d)}$.
Then every identity of  $S$ also satisfies  Property $P_{(p-u \le c-u-v, q \le d)}$ for each $0 \le u \le p-1$ and $0 \le v \le c-p$.
\end{obs}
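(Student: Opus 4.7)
The plan is to derive Property $P_{(p-u \le c-u-v, q \le d)}$ from Property $P_{(p \le c, q \le d)}$ by padding each identity of $S$ on both sides with copies of the variable $x$. Given an identity ${\bf u} \approx {\bf v}$ of $S$ and variables $x \ne y$ with $\occ(x, {\bf u}) = \occ(x, {\bf v}) = c-u-v$ and $\occ(y, {\bf u}) = \occ(y, {\bf v}) = d$, the identity $x^u {\bf u} x^v \approx x^u {\bf v} x^v$ also holds in $S$ (one-sided multiplication preserves identities), and in it $x$ now occurs exactly $c$ times on each side while $y$ still occurs exactly $d$ times on each side.

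First I would apply the hypothesized $P_{(p \le c, q \le d)}$ to this padded identity to obtain
\[({_{p}x}) <_{x^u {\bf u} x^v} ({_{q}y}) \iff ({_{p}x}) <_{x^u {\bf v} x^v} ({_{q}y}).\]
Then I would translate the $p$-th and $q$-th occurrences back to ${\bf u}$ and ${\bf v}$. Because $0 \le u \le p-1$ and $0 \le v \le c-p$ give the bounds $u+1 \le p \le c-v$, the $p$-th occurrence of $x$ from the left in $x^u {\bf u} x^v$ lies inside the middle factor and coincides with the $(p-u)$-th occurrence of $x$ in ${\bf u}$ (and likewise for ${\bf v}$); meanwhile the $q$-th $y$ in $x^u {\bf u} x^v$ is just the $q$-th $y$ in ${\bf u}$, since $x \ne y$ means the padding carries no $y$. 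Substituting these identifications into the equivalence above yields
\[({_{p-u}x}) <_{\bf u} ({_{q}y}) \iff ({_{p-u}x}) <_{\bf v} ({_{q}y}),\]
which is exactly Property $P_{(p-u \le c-u-v, q \le d)}$ for the original identity ${\bf u} \approx {\bf v}$.

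The corner case $x=y$ forces $c-u-v = d = c$, hence $u=v=0$, in which case the two properties coincide and there is nothing to prove. I do not anticipate any real obstacle; the only subtlety is checking that the given bounds on $u$ and $v$ are precisely what ensures $c-u-v \ge 1$ (so $x$ still appears in ${\bf u}$) and what keeps the shifted $p$-th occurrence of $x$ strictly inside the middle factor ${\bf u}$ rather than in one of the padded prefixes or suffixes.
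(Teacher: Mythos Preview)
Your proof is correct and follows essentially the same approach as the paper: both arguments pad the identity ${\bf u}\approx{\bf v}$ to $x^u{\bf u}x^v\approx x^u{\bf v}x^v$ and then invoke Property $P_{(p\le c,\,q\le d)}$ on the padded identity. The paper phrases this as a proof by contradiction and omits the $x=y$ corner case and the explicit bookkeeping on occurrence indices, but the underlying idea is identical.
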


\begin{proof}  To obtain a contradiction, suppose that   $S \models {\bf u} \approx {\bf v}$ such that for some
$x, y \in \con({\bf u})$ with $\occ(x, {\bf u}) = \occ(x, {\bf v}) = c-u-v$ and $\occ(y, {\bf u}) = \occ(y, {\bf v}) = d$, we have
$({_{p-u}x}) <_{\bf u} ({_{q}y})$ but  $({_qy}) <_{\bf v} ({_{p-u}x})$.
Then $S \models x^u \cdot {\bf u}\cdot x^{v} \approx x^u \cdot {\bf v} \cdot x^{v}$. Since the second identity does not satisfy Property $P_{(p \le c, q \le d)}$,
this contradicts our assumption that every identity of $S$ satisfies  Property $P_{(p \le c, q \le d)}$.
\end{proof}

Given $m>0$ we say that an identity  ${\bf u} \approx {\bf v}$ is {\em $m$-balanced} if $\con({\bf u}) = \con({\bf v})$ and for each $x \in \con({\bf u})$ we have $\occ(x, {\bf u}) = \occ(x, {\bf v})$
whenever either $\occ(x, {\bf u})\le m$ or $\occ(x, {\bf u}) \le m$.
It is easy to see that
every identity of a monoid $M$ is $m$-balanced  if and only if the word $x^m$ is an isoterm for $M$.

\begin{theorem} \label{T: main2} Let $M$ be a finite monoid such that  for some $m > 0$, $M \models A_{m+1,1}$
and $x^m$ is an isoterm for $M$.   Suppose that for some index set $I$ of quadruples,
one can find a (possibly empty) collection of properties $\{P_{(p \le c, q \le d)} \mid (p, c,  q, d) \in I\}$ such that
the following holds:

$\bullet$ $M \models {\bf u} \approx {\bf v}$ if and only if
 ${\bf u} \approx {\bf v}$ is $m$-balanced and  satisfies every property $P_{(p \le c, q \le d)}$ in the collection.

Let $n > \max(6, |M|+1)$ and $\mathcal F (X_n) = \{{\bf t}_{ij} \mid 1 \le i < j \le n \}$ be an $n$-ary scheme for $M$
as in Lemma~\ref{L: x^m}.  Let  $<_{\mathcal F}$ be the partial order on  $\os(\mathcal F)$ with domain  $\biguplus\limits_{ (p, c,  q, d) \in I} C_{(p \le c, q \le d)}$
as explained in Lemma~\ref{L: order}.

If $<_{\mathcal F}$ can be extended to a total order on $\os(\mathcal F)$, then $M$ is finitely related. 

\end{theorem}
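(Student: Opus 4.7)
The plan is to invoke Lemma~\ref{L: nfr}: it suffices to show that for every $n>\max(6,|M|+1)$ and every $n$-ary scheme $\mathcal F=\{{\bf t}_{ij}\}$ for $M$ (WLOG normalized by Lemma~\ref{L: x^m}), the scheme $\mathcal F$ comes from an $n$-ary term. Let $\preceq$ denote the total order on $\os(\mathcal F)$ extending $<_{\mathcal F}$ provided by the hypothesis, and define ${\bf t}_n$ to be the word listing each element of $\os(\mathcal F)$ in the order $\preceq$. This construction is well-formed because, for each variable $x_s$ and each $a$, the relation ${_a x_s}\prec{_{a+1}x_s}$ is already encoded in $<_{\mathcal F}$: it holds in every scheme term ${\bf t}_{kl}$ with $s\notin\{k,l\}$, hence in their intersection.

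The main claim is that $M\models{\bf t}_n^{(ij)}\approx{\bf t}_{ij}$ for every $1\le i<j\le n$. By the assumed characterization of identities of $M$, this reduces to verifying (a) the identity is $m$-balanced, and (b) it satisfies every $P_{(p\le c,q\le d)}\in I$. Part (a) is immediate from Lemma~\ref{L: x^m}(ii)--(iii): the counts of each $x_s$ with $s\neq i,j$ agree, while $\occ(x_j,{\bf t}_n^{(ij)})=\occ(x_i,\mathcal F)+\occ(x_j,\mathcal F)$ matches $\occ(x_j,{\bf t}_{ij})$ exactly when this sum is at most $m$ (both sides exceed $m$ otherwise). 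For (b), fix variables $x,y$ with matching occurrence counts $c,d$ on both sides. If neither $x$ nor $y$ equals $x_j$, then $\{{_p x},{_q y}\}\in C_{(p\le c,q\le d)}$ lies in the domain of $<_{\mathcal F}$; its order in ${\bf t}_n^{(ij)}$ (inherited from $\preceq$, which extends $<_{\mathcal F}$) and in ${\bf t}_{ij}$ (coinciding with $<_{\mathcal F}$ by definition, since $\{x,y\}\cap\{x_i,x_j\}=\emptyset$) both agree with $<_{\mathcal F}$, and so with each other.

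The remaining case $x=x_j$ (the case $y=x_j$ is symmetric) is the main obstacle, and forces $c=\occ(x_i,\mathcal F)+\occ(x_j,\mathcal F)\le m$. I choose $k,l\notin\{i,j\}$ with $x_k,x_l\neq y$ (possible because $n>6$) and apply scheme consistency to obtain $M\models{\bf t}_{ij}^{(kl)}\approx{\bf t}_{kl}^{(ij)}$, an identity that must satisfy $P_{(p\le c,q\le d)}$. Since the renaming $x_k\mapsto x_l$ leaves the positions of $x_j$ and $y$ intact, the position of ${_p x_j}$ relative to ${_q y}$ in ${\bf t}_{ij}$ equals that of the $p$-th entry of the merged $(x_i,x_j)$-sequence of ${\bf t}_{kl}$ under $<_{{\bf t}_{kl}}$; the analogous description holds for ${\bf t}_n^{(ij)}$ under $\preceq$. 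A counting argument---the number of merged entries preceding ${_q y}$ equals $|\{a:{_a x_i}\prec{_q y}\}|+|\{b:{_b x_j}\prec{_q y}\}|$---reduces the task to showing that each individual comparison ${_a x_i}$ vs ${_q y}$ and ${_b x_j}$ vs ${_q y}$ agrees in $\preceq$ and $<_{{\bf t}_{kl}}$. These derived comparisons are governed by the properties $P_{(a\le\occ(x_i,\mathcal F),q\le d)}$ and $P_{(b\le\occ(x_j,\mathcal F),q\le d)}$ which, by Observation~\ref{O: P}, follow from $P_{(p\le c,q\le d)}$ and are therefore satisfied by $M$; enlarging $I$ to include these derivatives preserves the characterization of identities of $M$, so I take $\preceq$ to be a total extension of the enlarged partial order, making the required individual orderings coincide. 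Combined with Lemma~\ref{L: nfr}, this yields that $M$ is finitely related.
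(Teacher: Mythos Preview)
Your proposal follows the same route as the paper: build ${\bf t}$ from the total extension of $<_{\mathcal F}$, check $m$-balancedness via Lemma~\ref{L: x^m}, dispatch the case $\{x,y\}\cap\{x_i,x_j\}=\emptyset$ directly from the definition of $<_{\mathcal F}$, and for $x=x_j$ use scheme consistency with some ${\bf t}_{kl}$ together with Observation~\ref{O: P} to reduce the merged comparison to separate $x_i$- and $x_j$-comparisons against ${_qy}$ --- your counting argument is exactly the content of the paper's Claim~\ref{c}. Your explicit enlargement of $I$ mirrors the paper's implicit invocation of Lemma~\ref{L: order} on the derived quadruples $(p-u,c-u-v,q,d)$ and $(u,u+v,q,d)$; in both treatments one is tacitly assuming that $\preceq$ respects these derived comparisons, which amounts to replacing $I$ by its closure under Observation~\ref{O: P} from the outset (this does not alter the characterization of identities, and in the application to $M(a^2b^2)$ the chosen $I$ is already closed).
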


\begin{proof} Let $<_{\bf t}$ denote the extension of $<_{\mathcal F}$  to a total order on $\os(\mathcal F)$.
Define  the word  ${\bf t}$ by ordering  the set $\os(\mathcal F)$ under $<_{\bf t}$. Then
 $\os(\mathbf t) = \os(\mathcal F)$, $\con({\bf t}) = X_n$ and $\occ(x, {\mathcal F}) = \occ(x, {\bf t})$ for each $x \in X_n$.

Take some $1 \le i < j \le n$. 
Since  $\con({\bf t}) = X_n$, we have $\con({\bf t}^{(ij)}) = \con({\bf t}_{ij}) = X_n \setminus \{x_i\}$. 
Let us verify that the identity ${\bf t}^{(ij)} \approx  {\bf t}_{ij}$ is $m$-balanced. Indeed,  if $occ(x_i, \mathcal F) + occ(x_j, \mathcal F) \le m$ then
\[ \occ(x_j,  {\bf t}_{ij}) \stackrel{Lemma~\ref{L: x^m}}{=}   occ(x_i, \mathcal F) + occ(x_j, \mathcal F)  =  occ(x_i, \mathbf t) + occ(x_j, \mathbf t)  =  \occ(x_j,  {\bf t}^{(ij)}), \]
Otherwise,   $\occ(x_j,  {\bf t}_{ij}) >m$ by Lemma~\ref{L: x^m} and $\occ(x_j,  {\bf t}^{(ij)}) >m$ too.

If $1 \le s \le n$ such that $s \not\in \{i,j\}$ then    
\[\occ(x_s,  {\bf t}^{(ij)}) = \occ(x_s,  {\bf t}) =  \occ(x_s,  {\mathcal F}) = \occ(x_s,  {\bf t}_{ij}).\]

 If the collection of properties $\{P_{(p \le c, q \le d)} \mid (p, c,  q, d) \in I\}$ is empty, then $M$ is equationally equivalent to $M(a^m)$.
Since the monoid  $M(a^m)$ is commutative it is finitely related by Theorem~3.6 in \cite{DJPS}.
So, we may assume that the collection of properties is not empty.
Let us verify that  the identity ${\bf t}^{(ij)} \approx  {\bf t}_{ij}$ satisfies every property in $\{P_{(p \le c,q \le d)} \mid (p, c,  q, d) \in I\}$.

Indeed, if $(p, c,  q, d) \in I$ then $1 \le p \le c\le m$, $1 \le q \le d\le m$ and  for every identity  ${\bf u} \approx {\bf v}$ of $M$ the following holds:

$\bullet$  for each $x, y \in \con({\bf u})$ with $\occ(x, {\bf u})=c$ and $\occ(y, {\bf u})=d$ we have
\[({_{p}x}) <_{\bf u} ({_{q}y}) \Leftrightarrow ({_{p}x}) <_{\bf v} ({_{q}y}).\]

Take  some $x, y \in X_n$ such that  $\occ(x, {\bf t}_{ij})=c$ and $\occ(y, {\bf t}_{ij})=d$.
If $\{x_i, x_j\} \cap \{x, y\} = \emptyset$ then
\[({_{p}x}) <_{{\bf t}^{(ij)}} ({_{q}y}) \Leftrightarrow ({_{p}x}) <_{{\bf t}} ({_{q}y}) \Leftrightarrow ({_px}) <_{\mathcal F} ({_qy}) \stackrel{Lemma~\ref{L: order}}{\Leftrightarrow}({_{p}x}) <_{{\bf t}_{ij}} ({_{q}y}).\]

Now assume that say, $x=x_j$. Since $n\ge 6$ we can pick some $1 \le k < l \le n$ with $\{x_k, x_l\} \cap \{x_i, x_j\} = \emptyset$. 

\begin{claim} \label{c}
If $({_{p}x_j}) <_{{\bf t}_{kl}^{(ij)}} ({_{q}y})$ then  $({_{p}x_j}) <_{{\bf t}^{(ij)}} ({_{q}y})$.
 \end{claim}

\begin{proof}[Proof of claim]  If $({_{p}x_j}) <_{{\bf t}_{kl}^{(ij)}} ({_{q}y})$ then 
for some $0 \le u \le p$ and $0 \le v \le c-p$   we have
  $({_{p-u}x_j}) <_{{\bf t}_{kl}} ({_{q}y})$, $\occ(x_j, {\bf t}_{(kl)})=c-u -v$,
 $({_ux_i}) <_{{\bf t}_{kl}} ({_{q}y})$ and $\occ(x_i, {\bf t}_{(kl)})= u+v$.   
 By Observation~\ref{O: P} the monoid $M$ satisfies Properties  $P_{(p-u \le c-u-v, q \le d)}$ and  $P_{(u \le u+v, q \le d)}$.
Then
 \[({_{p-u}x_j}) <_{{\bf t}_{kl}} ({_{q}y}) \stackrel{Lemma~\ref{L: order}}{\Leftrightarrow} ({_{p-u}x_j}) <_{\mathcal F} ({_{q}y}) \Leftrightarrow ({_{p-u}x_j}) <_{\mathbf t} ({_{q}y});\]
 \[({_{u}x_i}) <_{{\bf t}_{kl}} ({_{q}y}) \stackrel{Lemma~\ref{L: order}}{\Leftrightarrow} ({_{u}x_i}) <_{\mathcal F} ({_{q}y}) \Leftrightarrow ({_{u}x_i}) <_{\mathbf t} ({_{q}y}).\]
Consequently, $({_{p}x_j}) <_{{\bf t}^{(ij)}} ({_{q}y})$.
\end{proof}

Since
$M \models  {\bf t}_{ij}^{(kl)} \approx  {\bf t}^{(ij)}_{kl}$, we have 
\[ ({_{p}x_j}) <_{{\bf t}_{ij}} ({_{q}y}) \Leftrightarrow ({_{p}x_j}) <_{{\bf t}_{ij}^{(kl)}} ({_{q}y})     
 \Leftrightarrow ({_{p}x_j}) <_{{\bf t}_{kl}^{(ij)}} ({_{q}y})  \stackrel{Claim~\ref{c}}{\Leftrightarrow}  ({_{p}x_j}) <_{{\bf t}^{(ij)}} ({_{q}y}). \]
 
 Therefore $M \models  {\bf t}^{(ij)} \approx  {\bf t}_{ij}$. 
This means that $\mathcal F(X_n)$ comes from the term ${\bf t}(X_n)$ and $M$ is finitely related by Lemma~\ref{L: nfr}.
\end{proof}

 Let $W_m$ denote the set of all words involving at least two variables, where each 
variable occurs at most $m$ times. Then the identities of  $M(W_n)$ satisfy every property $P_{(p \le c, q \le d)}$ for each $1 \le p \le c \le m$ and $1 \le q \le d \le m$.
Hence the relation  $<_{\mathcal F}$ is a total order on $\os(\mathcal F)$. Thus Theorem~\ref{T: main2} readily implies Theorem~3.2 in \cite{DG} which says that
the monoid $M(W_m)$ is finitely related.

\begin{ex} \label{E: aabb}
The monoid $M=M(a^2b^2)$ is finitely related.
\end{ex}

\begin{proof} Notice that  $x^2$ is an isoterm for $M$ and $M \models A_{3, 1}$.
Take $n>11$ and consider an $n$-ary scheme  $\mathcal F (X_n) = \{{\bf t}_{ij} \mid 1 \le i < j \le n \}$ for $M$.
 In view of Lemma~\ref{L: x^m} we have:
\[X_n = T \uplus X \] where $T$ is the set of all linear variables in $\mathcal F$ and $X$ is the set of all 2-occurring variables in $\mathcal F$.
Then \[\os({\mathcal F}) = T \uplus  {_1X} \uplus {_2X},\] where  ${_1X}$ is the set of all first occurrences of variables in $X$, and
 ${_2X}$ is the set of all second occurrences of variables in $X$.

It is easy to see that every identity of $M$  satisfies Property $P_{(1 \le 2, 2 \le 2)}$. Using Observation~\ref{O: P}, it is routine to verify
the following:

 $\bullet$ $M \models {\bf u} \approx {\bf v}$ if and only if
 ${\bf u} \approx {\bf v}$ is 2-balanced and satisfies
 \[P_{(1 \le 1, 1\le 1)} \wedge P_{(1 \le 1, 1 \le 2)} \wedge  P_{(1 \le 1, 2 \le 2)} \wedge    P_{(1 \le 2, 2 \le 2)}.\]
This means that  the order $<_{\mathcal F}$ is defined between any two letters in $T$ and for every pair $\{t, {_1x}\}$, $\{t, {_2x}\}$, $\{{_1x}, {_2y}\}$ for each $t \in T$ and $x, y \in X$. The order $<_{\mathcal F}$ is not defined only between the first occurrences of letters in $X$ and  between the second occurrences of letters in $X$.

For each ${_1x} \in \os({\mathcal F})$ define
\[L({_1x}) = \{ u \in  (T \uplus  {_2X}) \mid   u <_{\mathcal F} ({_1x}) \}.\]
Given ${_1x}, {_1y} \in \os({\mathcal F})$ we write $({_1x}) \sim ({_1y})$ whenever $L({_1x}) = L({_1y})$.
Since ${_1x}$ is comparable in order $<_{\mathcal F}$ to every element of $T \uplus  {_2X}$,
this is an equivalence relation on $\os({\mathcal F})$.
If $[{_1x}]_\sim$ and  $[{_1y}]_\sim$ are distinct classes of the equivalence relation $\sim$, then 
we compare them as follows:

\begin{itemize}

\item {If for some  $u \in  (T \uplus  {_2X})$  we have  $({_1x})<_{\mathcal F} u$ and $u <_{\mathcal F} ({_1y})$
then we define $([{_1x}]_\sim) <_{\mathcal F} ([{_1y}]_\sim)$;}

\item {If for some  $u \in  (T \uplus  {_2X})$  we have  $({_1y})<_{\mathcal F} u$ and $u <_{\mathcal F} ({_1x})$
then we define $([{_1y}]_\sim) <_{\mathcal F} ([{_1x}]_\sim)$;}

\end{itemize}

In a similar way, we define and compare the equivalence classes  $[{_2x}]_\sim$ and  $[{_2y}]_\sim$ for every $x, y \in X_n$.
Now if we define $<_{\mathcal F}$ inside each equivalence class in an arbitrary way, we turn $<_{\mathcal F}$ into a total order on  $\os({\mathcal F})$.
\end{proof}

\begin{question} \label{Q} Given a finite set of words $W$ where each variable occurs at most $m>0$ times, 
is it true that $M(W)$ is finitely related if and only if $M(W \cup \{a^m\})$ is finitely related?
\end{question}

It seems that the answer to Question~\ref{Q} remains unknown if `finitely related' is replaced by `finitely based'.

\begin{cor} \label{C: dis}  Suppose that every word ${\bf u}$ in  $W$ depends on two variables
and every variable occurs twice in $\bf u$. Then $M(W)$ is finitely related if and only if
modulo renaming variables, $W = \{abab, ab^2a, a^2b^2\}$ or $W = \{a^2b^2\}$.

\end{cor}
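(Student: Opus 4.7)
Up to renaming of variables, the only two-variable words in which each variable occurs twice are $abab$, $ab^2a$, and $a^2b^2$. Since renaming variables does not alter the equational theory of $M(W)$ (and hence, by \cite{DJPS}, its finite relatedness), I may assume that $W$ is a nonempty subset of $\{abab, ab^2a, a^2b^2\}$. This gives seven cases, which I would dispatch individually.

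For the ``if'' direction, I would verify the two FR cases. The case $W = \{a^2b^2\}$ is Example~\ref{E: aabb}. For $W = \{abab, ab^2a, a^2b^2\}$, I would apply Theorem~\ref{T: main2} with $m = 2$, following the template of Example~\ref{E: aabb}. The key observation is that since all six two-variable words in which each variable occurs twice are now isoterms of $M(W)$ (they are renamings of elements of $W$), every pairwise occurrence order is forced across identities; equivalently, $M(W)$ satisfies the full collection of properties $P_{(p \le c, q \le d)}$ for $1 \le p \le c \le 2$ and $1 \le q \le d \le 2$. Consequently the induced relation $<_{\mathcal F}$ is already a total order on $\os(\mathcal F)$, and Theorem~\ref{T: main2} applies directly.

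For the ``only if'' direction, four of the remaining five cases are already handled: $W = \{abab\}$ by Theorem~4.4 in \cite{DG}, $W = \{ab^2a\}$ by Theorem~4.13 in \cite{DG}, $W = \{abab, a^2b^2\}$ by Theorem~4.18 in \cite{DG}, and $W = \{ab^2a, a^2b^2\}$ by Example~\ref{E: aba} of this paper. The remaining case $W = \{abab, ab^2a\}$ I would handle by applying Theorem~\ref{main} with the chain words $\mathcal C_n$: conditions (i)--(iii) go through as for $M(abab)$ (in particular $M(\{abab,ab^2a\}) \models A_{3,1}$, and chain words have $d = 2$ so $\alpha = 3 \le 2d$), while condition (iv) follows because the scheme's forced two-variable restrictions would entail, on the one hand, that $\mathbf t_n[x_i, x_{i+1}] = x_i x_{i+1} x_i x_{i+1}$ as a word (since $xyxy$ is an isoterm of $M$), and on the other hand $\mathbf t_n[x_1,x_n] \approx x_n x_1 x_n x_1 = \mathcal Y(x_n, x_1)$ would force $\mathbf t_n[x_1,x_n] = x_n x_1 x_n x_1$ as a word, producing the same contradiction with the scheme consistency as in the proof of Theorem~4.4 of \cite{DG}.

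The most delicate step is the FR direction for $W = \{abab, ab^2a, a^2b^2\}$: one must characterize the equational theory of $M(W)$ in the language of Theorem~\ref{T: main2}. Concretely, the necessity of $2$-balancedness and of each property $P_{(p \le c, q \le d)}$ is a direct consequence of the isoterm property, but one must also check the sufficiency, namely, that any $2$-balanced identity preserving all pairwise occurrence orders actually holds in $M(W)$. This reduces to analyzing substitutions of variables by elements of $\operatorname{Sub}(W)$: the nonzero values of a word under such a substitution are determined by its two-variable projections, and the isoterms of $M(W)$ rigidly constrain these. This is the one place where a nontrivial amount of bookkeeping is required.
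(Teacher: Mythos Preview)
Your proposal is correct, and the overall case split matches the paper's. The differences are in two places, both of which the paper handles more economically.

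For $W=\{abab,ab^2a,a^2b^2\}$, the paper does not apply Theorem~\ref{T: main2} directly. Instead it observes, via Lemma~\ref{L: iso}, that $M(abab,ab^2a,a^2b^2)$ is equationally equivalent to $M(W_2)$, and then invokes Theorem~3.2 of \cite{DG} (which is exactly the instance of Theorem~\ref{T: main2} discussed in the paragraph following its proof). This completely sidesteps the ``delicate bookkeeping'' you flag: once you know the identities coincide with those of $M(W_2)$, the characterization of the equational theory as ``$2$-balanced plus all $P_{(p\le c,q\le d)}$'' is immediate.

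For $W=\{abab,ab^2a\}$, the paper simply cites Theorem~4.4 of \cite{DG}, whose hypotheses (a chain-word sufficient condition analogous to Theorem~\ref{abba} here) already cover this monoid; no new argument is needed. Your plan to run Theorem~\ref{main} with $\mathcal X=\mathcal C$ would also work, but two points in your sketch are imprecise. First, condition~(ii) of Theorem~\ref{main} (the commutation identity $\mathcal C_r\cdot\mathcal C_k\approx\mathcal C_k\cdot\mathcal C_r$) is not automatic and must be checked for $M(abab,ab^2a)$; you assert it ``goes through as for $M(abab)$'' without justification. Second, your verification of condition~(iv) appeals to ``scheme consistency,'' but condition~(iv) is a statement about $n$-ary \emph{words}, not schemes: the clean way to discharge it is via Proposition~\ref{P: C}[(iii)$\to$(iv)], which forces ${\bf t}_n=\mathcal C_n$ and hence ${\bf t}_n[x_1,x_n]=x_1^2x_n^2\not\approx x_nx_1x_nx_1$ in $M$.

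Finally, the paper cites Examples~4.6, 4.14, 4.19 of \cite{DG} for $M(abab)$, $M(abab,a^2b^2)$, $M(ab^2a)$ respectively (the specific applications), rather than the general Theorems~4.4, 4.13, 4.18 you cite; this is cosmetic.
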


\begin{proof} In view of Lemma~\ref{L: iso}, renaming letters in $W$ does not change the identities of $M(W)$. So, we may assume that
$W$ is a subset of $\{abab, ab^2a, a^2b^2\}$. Using Lemma~\ref{L: iso}, it is easy to show that $M(abab, ab^2a, a^2b^2)$ is equationally equivalent to $M(W_2)$. Hence these monoids are finitely related by Theorem~3.2 in \cite{DG}. The monoid $M(a^2b^2)$ is finitely related by Example~\ref{E: aabb}.

The following monoids are non-finitely related: 
$M(abab)$ by \cite[Example~4.6]{DG}, 
$M(abab, a^2b^2)$ by \cite[Example~4.14]{DG}, 
$M(abab, ab^2a)$ by \cite[Theorem~4.4]{DG}, 
$M(ab^2a)$ by \cite[Example~4.19]{DG} and
$M(a^2b^2, ab^2a)$ by Example~\ref{E: aba}.
\end{proof}

Notice that Corollary~\ref{C: dis} remains true if we replace `finitely related' by   `finitely based' (see \cite{JS}).
In general, there are no correlation between finitely based and finitely related algebras.
According to Theorem~6.7 in \cite{DG}, the monoid $M(asabtb)$ is finitely related. In contrast, $M(asabtb)$ is non-finitely
based by Lemma~5.5 in  \cite{MJ05}.
While the first examples of non-finitely related algebras were finitely based \cite{Post}, at the moment, no example of a finitely based but not finitely related semigroup exists.

\section*{Acknowledgments}
The author thanks the anonymous referee for helpful comments.
\small

\end{document}